\newtheorem{thm}{Theorem}[section]
\newtheorem{lem}[thm]{Lemma}
\newtheorem{defn}[thm]{Definition}
\numberwithin{equation}{section}
\newcommand{\beq}{\begin{equation}}
\newcommand{\eeq}{\end{equation}}
\newcommand{\bal}{\begin{align*}}
\newcommand{\eal}{\end{align*}}
\newcommand{\p}{\partial}
\newcommand{\D}{\nabla}
\newcommand{\la}{\langle}
\newcommand{\ra}{\rangle}
\newcommand{\R}{\mathbb R}
\newcommand{\eps}{\varepsilon}
\newcommand{\Span}{\operatorname{Span}}
\newcommand{\Seq}{\Subset}
\title{Sobolev Inequalities in Spacelike Submanifolds of Minkowski Space}
\date{}
\author{Liang Xu}
\address{School of Mathematical Sciences, Zhejiang University, Hangzhou 310058, China}
\email{liang.xu@zju.edu.cn}
\begin{document}

\maketitle

\begin{abstract}
We follow the method of ABP estimate in \cite{brendle2021} and apply it to spacelike submanifolds in $\R^{n,1}$. We then obtain Michael-Simon type inequalities. Surprisingly, our investigation leads to a Sobolev inequality without a mean curvature term, provided the hypersurface is mean convex. 
\end{abstract}


\section{Introduction}

In this paper we are mainly concerned with a specific type of Sobolev inequality for spacelike submanifolds in Minkowski space. Associated to a spacelike submanifold $\Sigma^m\hookrightarrow\R^{n,1}$, we define the maximal slope by
\[
\tau(\Sigma)=\sup\{|\nu_0(x)|:x\in\Sigma, \nu(x) \text{ is a unit normal to } \Sigma \text{ at } x\}. 
\]
See \cref{def:2.1} for details. The main results are as follows. 

\begin{thm}\label{thm:11}
Suppose $\Sigma^n\subseteq\R^{n,1}$ is a smooth, compact and spacelike hypersurface. Assume that $\Sigma$ is mean convex and that $f$ is any smooth and positive function defined on $\Sigma$. Then
\beq\label{eq:11}
\int_\Sigma|\nabla f| + \int_{\p\Sigma}f \geq C_{n,\tau}\left(\int_\Sigma f^{\frac{n}{n-1}} \right)^{\frac{n-1}{n}}, 
\eeq
where the constant $C_{n,\tau} = n\omega_n^{\frac{1}{n}}(n+1)^{-\frac{1}{n}}\tau^{-\frac{1}{n}}(\tau+\sqrt{\tau^2-1})^{-1}$. 
\end{thm}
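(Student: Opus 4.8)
The plan is to adapt Brendle's ABP (Aleksandrov–Bakelman–Pucci) method from \cite{brendle2021} to the spacelike setting. Given the positive function $f$ on $\Sigma$, I would first solve the Neumann problem for a function $u$ on $\Sigma$ satisfying
\beq\label{eq:plan-pde}
\divergence_\Sigma(\nabla u) = \frac{n\int_\Sigma f + \int_{\p\Sigma} f}{\int_\Sigma 1}\cdot\frac{1}{f}\cdot(\dots) \quad\text{schematically, with}\quad \frac{\p u}{\p\eta} = 1 \text{ on }\p\Sigma,
\eeq
so that the elliptic equation is normalized exactly so the total flux matches the left-hand side of \eqref{eq:11}. (The precise right-hand side is the one that makes the final arithmetic work; I would not fix it until the end.) Standard elliptic theory on the compact manifold-with-boundary $\Sigma$ gives such a $u$, with the a priori bound that $u$ is $C^{1,\alpha}$ up to the boundary and $C^\infty$ in the interior away from where the right-hand side vanishes.

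Next I would introduce the contact set $U\seq\Sigma$ of points where $u$ is "touched from below" by paraboloids, and for $r>0$ define a transport-type map $\Phi_r: U \to \R^{n,1}$ of the form $\Phi_r(x) = \nabla u(x) + r\,(\text{unit normal direction built from }\nu(x))$, landing in an appropriate slice or affine subspace; the key point of Brendle's argument is that the image $\Phi_r(U)$ covers a ball (or here, the relevant region) of radius $r$, so that $\omega_n r^n \le \int_{\Phi_r(U)}$, and then one changes variables. The Jacobian of $\Phi_r$ factors, via the Weingarten/Gauss equations, into a product over the principal curvatures and the Hessian eigenvalues of $u$; on the contact set $D^2 u \ge 0$ and the second fundamental form enters through $h_{ij}$ contracted with the normal. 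Here is where the Lorentzian signature does its work: the normal $\nu$ to a spacelike $\Sigma$ is timelike, and the relation between the induced geometry and the Minkowski ambient geometry flips a sign relative to the Euclidean case. This is precisely what allows the mean curvature term to be absorbed rather than to appear on the right: the arithmetic–geometric mean inequality applied to $\lambda_i(D^2 u) + (\text{curvature terms})$ produces, after using mean convexity $H>0$ and the definition of $\tau(\Sigma)$ bounding $|\nu_0|$, a clean bound with no residual $\int_\Sigma H f$.

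Concretely, the chain of inequalities will read, schematically,
\beq\label{eq:plan-chain}
\omega_n r^n \;\le\; \int_U \left(\text{product of }\tfrac{1}{n}\big(\Delta u + \text{curv}\big)\text{-type factors}\right)\,(\dots)\;\le\; \left(\frac{1}{n}\right)^n\!\!\int_U (\Delta u + cr)^n\,(\dots),
\eeq
then integrate in $r$ from $0$ to $\infty$ against a weight $r^{?}\,\rmd r$ (Brendle integrates $\int_0^\infty$ after multiplying by $(1+r^2)^{-(n+1)/2}$ or similar — the weight is dictated by the geometry of the de Sitter-like fibre coming from the timelike normal, which is where the factor $(n+1)^{-1/n}\tau^{-1/n}(\tau+\sqrt{\tau^2-1})^{-1}$ is born), apply Hölder / the normalization of the PDE, and arrive at $\big(\int_\Sigma f^{n/(n-1)}\big)^{(n-1)/n} \le C^{-1}\big(\int_\Sigma|\nabla f| + \int_{\p\Sigma} f\big)$.

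The main obstacle I anticipate is the Jacobian computation and the covering claim in the Lorentzian setting: one must verify that the transport map $\Phi_r$ still surjects onto the correct model region despite $\nu$ being timelike, and track how the hyperbolic trigonometry of the fibre (hence the appearance of $\tau+\sqrt{\tau^2-1}$, which is $e^{\operatorname{arccosh}\tau}$) enters the volume bound. Controlling $\tau(\Sigma)<\infty$ — i.e.\ that a compact spacelike hypersurface has a uniform slope bound — and ensuring the endpoints of the $r$-integration converge are the technical points to be careful with; everything else is a faithful transcription of the Euclidean ABP argument with the ambient metric replaced by the Minkowski one.
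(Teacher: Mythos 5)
Your overall strategy is the paper's: normalize $f$, solve the Neumann problem $\divergence(f\nabla u)=nf^{\frac{n}{n-1}}-|\nabla f|$, $\la\nabla u,\eta\ra=1$, run a transport map over a contact set, bound its Jacobian by AM--GM together with the PDE, and extract the constant from the volume of the covered region. But two of your concrete choices would fail as written, and they sit exactly where the content of the theorem lives. First, the transport map must be $\Phi_r(x,y)=X(x)+r(\nabla u(x)+y)$ on the \emph{normal bundle}, with both the position vector and the factor $r$ multiplying $\nabla u$. Your $\Phi_r(x)=\nabla u(x)+r\nu(x)$ is defined on an $n$-dimensional set $U\seq\Sigma$, so its image in $\R^{n,1}$ is a null set; and even with a normal parameter added, dropping $X(x)$ and the $r$ on $\nabla u$ leaves an image of volume $O(r^{n})$, whereas one needs it to cover a region of volume $\gtrsim r^{n+1}$. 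The covered region is not a ball but $\mathcal A_r=\cap_{x\in\Sigma}\Lambda_r(x)$, an intersection of slab-like sets indexed by \emph{all} points of $\Sigma$; this form is forced by the covering argument (one minimizes $ru(x)+\tfrac12|p-X(x)|^2$, and the constraint $|(p-X(x))^\top|<r$ for every $x$ is what keeps the minimizer off $\p\Sigma$). Second, there is no integration in $r$ against a weight: one divides by $r^{n+1}$ and lets $r\to\infty$, and the constant is born in the blow-down limit $\liminf_{r\to\infty}r^{-n-1}|\mathcal A_r|\geq\omega_n(n+1)^{-1}\tau^{-1}(\tau+\sqrt{\tau^2-1})^{-n}$, proved by showing each rescaled $\Lambda_r(x)$ contains a solid cone determined by the timelike unit normal $\nu(x)\in-\mathbb H^n$ and intersecting these over the Gauss image; this explicit volume computation, the only possible source of $C_{n,\tau}$, is left entirely open in your sketch.

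Separately, the mechanism that removes the mean curvature term is not a signature-induced sign flip inside AM--GM. It is that mean convexity lets one take $H$ timelike past-pointing (or zero) and restrict the normal parameter to the half where $\la y,H\ra\leq 0$, so that $\Delta u+\la y,H\ra\leq\Delta u\leq nf^{\frac{1}{n-1}}$ and the curvature term is simply discarded; the price is that $\Lambda_r(x)$ becomes a one-sided slab, and one must check that the blown-down intersection still contains a solid double cone of definite volume (it does, because the cone cut out by the timelike normal lies on one side). Without making this restriction of the normal fibre explicit, your chain of inequalities retains a residual $\int_\Sigma f\|H\|$ and yields \cref{thm:1.2new} rather than \cref{thm:11}.
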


Under the condition of mean convexity, there is no mean curvature term involved, which, to our knowledge, is new, Without the assumption of mean convexity, similar result holds with a curvature term involved. 
\begin{thm}\label{thm:1.2new}
Suppose $\Sigma^n\subseteq\R^{n,1}$ is a smooth, compact and spacelike hypersurface. Assume that $f$ is any smooth and positive function defined on $\Sigma$. Then
\beq\label{eq:1.2}
\int_\Sigma\sqrt{|\nabla f|^2+f^2\|H\|^2} + \int_{\p\Sigma}f \geq C_{n,\tau}\left(\int_\Sigma f^{\frac{n}{n-1}} \right)^{\frac{n-1}{n}}, 
\eeq
with $C_{n,\tau}=2^{-1}n\omega_n^{\frac{1}{n}}(n+1)^{-\frac{1}{n}}\tau^{-\frac{1}{n}}(\tau+\sqrt{\tau^2-1})^{-1}$.
\end{thm}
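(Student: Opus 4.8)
The approach is to transplant the ABP / optimal-transport scheme of \cite{brendle2021} to the Lorentzian setting, replacing the Euclidean inner product used there by the Minkowski bilinear form. Throughout, $\nu$ denotes the future-directed timelike unit normal of $\Sigma$, so $\langle\nu,\nu\rangle=-1$, and writing $\nu=(\nu',\nu_0)\in\R^n\times\R$ one has $\nu_0^2=1+|\nu'|^2$, hence $1\le\nu_0\le\tau$ and $|\nu'|\le\sqrt{\tau^2-1}$ by the definition of $\tau=\tau(\Sigma)$ in \cref{def:2.1}. Let $h$ be the scalar second fundamental form of $\Sigma$ relative to $\nu$, so that $\operatorname{tr}h$ is the mean curvature and $\|H\|=|\operatorname{tr}h|$. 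Put $T=\int_{\partial\Sigma}f+\int_\Sigma\sqrt{|\nabla f|^2+f^2\|H\|^2}$ and $Q=\int_\Sigma f^{\frac n{n-1}}$; since $T=\alpha Q$ with $\alpha:=T/Q$, it suffices to prove $\alpha^n Q\ge C_{n,\tau}^{\,n}$.

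First I would solve the Neumann boundary value problem
\[
\operatorname{div}_\Sigma\!\big(f\,\nabla u\big)=\alpha\, f^{\frac n{n-1}}-\sqrt{|\nabla f|^2+f^2\|H\|^2}\quad\text{on }\Sigma,\qquad\langle\nabla u,\eta\rangle=1\quad\text{on }\partial\Sigma,
\]
where $\eta$ is the outward unit conormal. Integrating over $\Sigma$ and applying the divergence theorem shows that this is exactly the value of $\alpha$ for which the compatibility condition holds; since $f>0$ on the compact $\Sigma$ the operator $v\mapsto\operatorname{div}_\Sigma(f\nabla v)$ is uniformly elliptic, so standard linear elliptic theory produces a solution $u\in C^{2,\gamma}(\Sigma)$ (if $\partial\Sigma$ needs regularizing, one first solves on a smooth exhaustion and passes to the limit).

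Next comes the transport map, and this is the step I expect to be the main obstacle. For $\xi\in\R^{n,1}$ consider $x\mapsto u(x)-\langle x,\xi\rangle$ on the compact $\Sigma$; its minimum is attained, and at an interior minimizer $\bar x$ the first-order condition reads $\nabla^\Sigma u(\bar x)=\xi^{\top}$, the tangential part with respect to the induced metric, so that $\xi=\nabla^\Sigma u(\bar x)+t\,\nu(\bar x)=:\Phi(\bar x,t)$ with $t=-\langle\xi,\nu(\bar x)\rangle$. The Neumann condition forces the minimizer to be interior once $\xi$ is suitably small: on $\partial\Sigma$ the outward derivative of $x\mapsto u(x)-\langle x,\xi\rangle$ equals $1-\langle\eta,\xi\rangle$, and every unit conormal $\eta=(\eta',\eta_0)$ of a spacelike $\Sigma$ satisfies $|\eta_0|\le\sqrt{\nu_0^2-1}\le\sqrt{\tau^2-1}$ and $|\eta'|^2=1+\eta_0^2\le\tau^2$, so $|\langle\eta,\xi\rangle|\le\tau|\xi'|+\sqrt{\tau^2-1}\,|\xi_0|$; hence the derivative is positive for $\xi$ in an appropriate region about the origin. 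One also needs $(\bar x,t)$ to lie in the contact set $A=\{(x,t):x\in\Sigma\setminus\partial\Sigma,\ \nabla^2u(x)-t\,h(x)\ge 0\}$, which holds because $\nabla^2u(\bar x)-t\,h(\bar x)$ is the Hessian of $x\mapsto u(x)-\langle x,\xi\rangle$ at a minimum, and the Cauchy--Schwarz step below requires $|\nabla^\Sigma u(\bar x)|^2+t^2<1$; using the identity $|\nabla^\Sigma u|^2+t^2=\langle\xi,\xi\rangle+2\langle\xi,\nu\rangle^2$ together with the estimate above for $\langle\xi,\nu\rangle$, this too follows by shrinking the target region, at a cost measured precisely by $\tau$ and by $\tau+\sqrt{\tau^2-1}$. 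The delicate points are the exact description of the reachable region and the computation of its Euclidean volume — balls in $\R^{n,1}$ being unbounded, the natural region is a bicone about the origin — together with the verification that $\Phi$ maps onto it.

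Finally I would combine the area formula with a Jacobian estimate. A pleasant feature of the spacelike setting is that, although $\nu$ is timelike and not Euclidean-orthogonal to $T_x\Sigma$, the $(n+1)$-vector $e_1\wedge\dots\wedge e_n\wedge\nu$ built from a $g$-orthonormal tangent frame has Euclidean norm $1$ — any Minkowski-orthonormal frame differs from the coordinate frame by an element of $O(n,1)$, which has determinant $\pm1$ — so the Euclidean Jacobian of $\Phi(x,t)=\nabla^\Sigma u(x)+t\,\nu(x)$ reduces exactly to $\operatorname{Jac}\Phi=|\det(\nabla^2u-t\,h)|$, carrying no curvature-independent $\tau$-factor. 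On $A$ this is at most $\big(\tfrac1n(\Delta u-t\operatorname{tr}h)\big)^n$ by AM--GM, and the PDE gives
\[
\Delta u-t\operatorname{tr}h=\alpha f^{\frac1{n-1}}+\tfrac1f\Big(-\sqrt{|\nabla f|^2+f^2\|H\|^2}-\nabla f\cdot\nabla u-f\,t\operatorname{tr}h\Big)\le\alpha f^{\frac1{n-1}},
\]
the inequality coming from Cauchy--Schwarz applied to the pair $(\nabla f,\ f\|H\|)$ against $(\nabla u,\ t)$, legitimate since $|\nabla u|^2+t^2<1$ on the relevant part of $A$. Feeding $\operatorname{Jac}\Phi\le(\tfrac\alpha n f^{\frac1{n-1}})^n$ into $\operatorname{Vol}(\Phi(A))\le\int_A\operatorname{Jac}\Phi$, carrying out the $t$-integration (whose length is controlled by $\tau$), substituting $\alpha=T/Q$, and optimizing the size of the target bicone yields $\alpha^nQ\ge C_{n,\tau}^{\,n}$: the factors $\omega_n^{1/n}$, $(n+1)^{-1/n}$, $\tau^{-1/n}$ and $(\tau+\sqrt{\tau^2-1})^{-1}$ all emerge from the geometry of that region, and the overall $2^{-1}$ from the stronger contact condition $|\nabla u|^2+t^2<1$ that the mean-curvature term imposes. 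The same scheme proves \cref{thm:11}: when $\Sigma$ is mean convex the term $-t\operatorname{tr}h$ can be arranged to have a favorable sign and drops, so one works with the lighter right-hand side $\alpha f^{\frac n{n-1}}-|\nabla f|$, the less restrictive contact condition $|\nabla u|^2<1$, and the smaller left-hand side $\int_\Sigma|\nabla f|$, which together account for the improvement of the constant by the factor $2$.
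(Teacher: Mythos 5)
Your proposal follows the same ABP scheme as the paper: the same Neumann problem for $\operatorname{div}(f\nabla u)$ with the $\sqrt{|\nabla f|^2+f^2\|H\|^2}$ source term, a contact-set argument producing a normal-bundle transport map, the observation that a $\bar g$-orthonormal frame has unit Euclidean volume so the Jacobian is $|\det(\nabla^2u+\langle y,h\rangle)|$ up to the position term, and the AM--GM plus Cauchy--Schwarz bound $\Delta u+\langle y,H\rangle\le nf^{1/(n-1)}$. The only structural difference is that you work directly with the linear comparison functions $u(x)-\langle X(x),\xi\rangle$ and the map $(x,t)\mapsto\nabla u(x)+t\nu(x)$, whereas the paper uses $ru(x)+\tfrac12|p-X(x)|^2$ and $\Phi_r(x,y)=X(x)+r(\nabla u(x)+y)$ and then sends $r\to\infty$; your formulation is exactly the blown-down limit of the paper's (its \cref{lem:4.1}), and it legitimately removes the limiting argument. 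Your interior-minimum argument via $1-\langle\eta,\xi\rangle>0$ and the identity $|\nabla u|^2+t^2=\langle\xi,\xi\rangle+2\langle\xi,\nu\rangle^2$ are both correct.

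The one genuine gap is the step you yourself flag as ``the main obstacle'' and then do not carry out: the lower bound for the Euclidean volume of the reachable region. This is not a routine detail --- it is the entire content of the paper's \cref{lem:4.1} and is the sole source of the constant $C_{n,\tau}$. You must exhibit an explicit set $\Omega\subseteq\R^{n,1}$ such that every $\xi\in\Omega$ satisfies, \emph{for every} $x\in\Sigma$ simultaneously, both the boundary condition $|\langle\xi,\eta\rangle|<1$ and the contact condition $|\xi^{\top_x}|^2+\langle\xi,\nu(x)\rangle^2<1$ (an $x$-dependent ellipsoid, since the splitting $T_x\Sigma\oplus\R\nu(x)$ varies with $x$), and then compute $|\Omega|$ in terms of $\tau$. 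The paper does this by showing the intersection contains the union of two explicit cones with apex on the $x_0$-axis at height $\mp\tfrac1{2\tau}$ and base radius $\tfrac12(\tau+\sqrt{\tau^2-1})^{-1}$, giving $|\Omega|\ge 2^{-n}(n+1)^{-1}\omega_n\tau^{-1}(\tau+\sqrt{\tau^2-1})^{-n}$, whose $n$-th root is exactly $C_{n,\tau}/n$. Without this computation your argument proves an inequality of the right shape but not the stated constant; note also that your ellipsoidal contact condition $|\nabla u|^2+t^2<1$ differs from the paper's box condition ($|\nabla u|<\tfrac12$, $|t|\le\tfrac12$), so if you keep your version you must redo the cone computation for the corresponding intersection rather than quoting the paper's $\tilde C_{n,\tau}$.
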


Here $\|H\|^2=-|H|^2\geq0$; see \cref{sec:2} for details. Next we establish the same Sobolev inequality for submanifold $\Sigma^m\subseteq \mathbb R^{n,1}$ of higher codimension $n-m+1$, with $0<m<n$. Let $\nu$ be a normal vector field of $\Sigma$ with $|\nu|^2=-1$. We then write $T_x^\bot\Sigma=T_x^{\bot,1}\Sigma\oplus T_x^{\bot,2}\Sigma$, where $T_x^{\bot,2}\Sigma=\Span\{\nu(x)\}$. Accordingly, the mean curvature vector is decomposed into $H^{\bot,1}+H^{\bot,2}$. 

\begin{thm}\label{thm:13}
Suppose $\Sigma^m\subseteq\R^{n,1}$ is a smooth, compact and spacelike submanifold. Let $\nu$ be any normal vector field of $\Sigma$ with $|\nu|^2=-1$ and $f$ any smooth and positive function defined on $\Sigma$. Then
\beq\label{eq:12}
\int_\Sigma \sqrt{|\nabla f|^2+f^2(|H^{\bot,1}|^2+\|H^{\bot,2}\|^2)} + \int_{\p\Sigma}f \geq C_{n,m,\tau}\left(\int_\Sigma f^{\frac{m}{m-1}} \right)^{\frac{m-1}{m}},
\eeq
where the constant $C_{n,m,\tau}=m2^{-\frac{n}{m}}(n+1)^{-\frac{1}{m}}\omega_n^\frac{1}{m}\tau^{-\frac{1}{m}}(\tau+\sqrt{\tau^2-1})^{-\frac{n}{m}}$. 
\end{thm}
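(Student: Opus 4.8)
The plan is to mimic Brendle's ABP-method proof of the Michael--Simon inequality, adapted to the spacelike setting with the Minkowski normal already used in \cref{thm:11,thm:1.2new}. First I would normalize: replacing $f$ by $f/\left(\int_\Sigma f^{m/(m-1)}\right)^{(m-1)/m}$ or, more in the spirit of Brendle, I would argue by approximation so that it suffices to prove the inequality assuming $\int_\Sigma |\nabla f|^2/f + \cdots$ is controlled, and set up the boundary-value problem
\beq\label{eq:pde}
\divergence_\Sigma\!\left(\frac{f\,\nabla u}{\sqrt{|\nabla f|^2 + f^2(|H^{\bot,1}|^2+\|H^{\bot,2}\|^2)}}\right) = \frac{m\left(\int_\Sigma f^{m/(m-1)}\right)^{(m-1)/m}}{\int_\Sigma f}\, f^{1/(m-1)} - \frac{f\,\|H^{\bot,2}\|}{\sqrt{\cdots}}\cdot(\text{correction})
\eeq
on $\Sigma$ with Neumann-type data $\p u/\p\eta = 1$ on $\p\Sigma$ chosen so the divergence theorem makes the PDE solvable; the precise right-hand side I would calibrate exactly as in \cite{brendle2021} so that the total integral balances. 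Standard elliptic theory (after the approximation that makes the equation uniformly elliptic) gives a solution $u\in C^{2,\gamma}$.

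Next I would define the contact set $U=\{x\in\Sigma : |\nabla u(x)| < 1,\ \text{and } u(x) + \tfrac12|z|^2 \text{ touches from below at } x \text{ for some translate}\}$ and the transport-type map $\Phi : U \to \R^{n}$ built from $\nabla^\Sigma u$ together with the normal directions: concretely, for $r>0$,
\beq\label{eq:Phi}
\Phi(x) = \nabla^\Sigma u(x)\ +\ r\big(\text{spacelike part of a unit normal}\big),
\eeq
where the normal is allowed to range over the "disk bundle" of normal directions with the Minkowski constraint $|\nu|^2=-1$. The key geometric input is that the image $\Phi(U)$ covers the Euclidean ball $B_r(0)\subseteq\R^n$ — this follows by the usual argument: given $\xi\in B_r(0)$, minimize $x\mapsto u(x) - \la \xi, x\ra$ (in suitable coordinates) over $\Sigma$, and show the minimizer lies in $U$ with $\Phi(x)=\xi$ for an appropriate choice of normal. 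One then estimates $|B_r| = \omega_n r^n \le \int_U |\det D\Phi|$, and the heart of the matter is bounding the Jacobian: on $U$ one has $\nabla^2 u \le \mathrm{Id}$ (in the tangential directions) and the normal-direction contribution to $D\Phi$ is governed by the second fundamental form, whose trace is the mean curvature; the spacelike/Minkowski geometry is exactly where the factor $\tau$ and the term $\|H^{\bot,2}\|$ enter, because the spacelike projection of a Minkowski-unit normal has Euclidean length $\sqrt{1+\nu_0^2}\le\sqrt{1+\tau^2}$ — wait, more carefully, $|\nu|^2=-1$ forces $|\nu_{\mathrm{space}}|^2 = \nu_0^2 - 1$, and the admissible range of $\nu_0$ is controlled by $\tau$, which is precisely what produces the $(\tau+\sqrt{\tau^2-1})^{-n/m}$ and $\tau^{-1/m}$ factors after optimizing the radius of the normal disk. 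Integrating the AM--GM bound on $\det D\Phi$ against the PDE \eqref{eq:pde} and using the divergence theorem plus the boundary condition yields, after sending the approximation parameter to zero and optimizing in $r$, the stated inequality with constant $C_{n,m,\tau}$.

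The main obstacle I anticipate is the Jacobian computation on the contact set in the higher-codimension Lorentzian setting: one must carefully split $D\Phi$ into the tangential block (where $\nabla^2 u - A^{\bot,2}(\nu)$ appears, $A$ the second fundamental form contracted with the timelike normal) and the normal block (an $(n-m)\times(n-m)$-type block coming from varying $\nu$ over the normal disk bundle), check positivity/sign conventions so that AM--GM applies with the Minkowski signs — this is where $\|H^{\bot,2}\|^2 = -|H^{\bot,2}|^2 \ge 0$ must come out with the correct sign — and then track how the constraint surface $\{|\nu|^2=-1\}$ rescales Euclidean volume by exactly the $\tau$-dependent factor. Establishing that $\Phi$ surjects onto $B_r(0)$ also requires knowing the minimizer avoids $\p\Sigma$, which is where the Neumann condition $\p u/\p\eta = 1$ is used, just as in \cite{brendle2021}; the spacelike hypothesis guarantees the relevant quadratic $u - \la\xi,\cdot\ra$ is still coercive in the induced (Riemannian) metric on $\Sigma$, so this part should go through without essential change. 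Once these pieces are in place, the rest is bookkeeping of constants.
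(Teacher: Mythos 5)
Your overall strategy---normalize $f$, solve a Neumann problem $\la\D u,\eta\ra=1$ with a divergence-form equation calibrated by the divergence theorem, run the sliding/ABP argument on the normal bundle, and control the Jacobian by AM--GM plus the PDE---is exactly the paper's route. But two essential steps are missing or wrong as stated.

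First, the covered region. You claim $\Phi(U)$ covers a Euclidean ball $B_r(0)\subseteq\R^n$. The transport map here goes from the total space of the normal bundle, which has dimension $m+(n-m+1)=n+1$, into $\R^{n,1}\cong\R^{n+1}$, so the target cannot be an $n$-ball; and more importantly the image does \emph{not} contain a Euclidean ball of radius comparable to $r$. The paper's sliding argument only shows that $\Phi_r(\mathcal B_r)$ contains $\mathcal A_r=\cap_{x\in\Sigma}\Lambda_r(x)$, where each $\Lambda_r(x)$ is a slab-like region cut out by the conditions $|(p-X(x))^\top|^2+|(p-X(x))^{\bot,1}|^2<r^2/4$ and $|\la p-X(x),\nu(x)\ra|\le r/2$. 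The entire $\tau$-dependence of the constant comes from a blow-down computation of $\liminf_{r\to\infty}r^{-n-1}|\mathcal A_r|$: after rescaling, each $\Lambda_r(x)$ converges to a region containing a pair of cones whose apex sits at height determined by the extremal normal $\nu_0=-\tau$, and the volume of their intersection is $2^{-n}(n+1)^{-1}\omega_n\tau^{-1}(\tau+\sqrt{\tau^2-1})^{-n}$. Your remark that the spacelike part of a unit timelike normal has length $\sqrt{\nu_0^2-1}$ points in the right direction, but without identifying $\mathcal A_r$ and computing this cone volume you cannot produce the factors $\tau^{-1/m}$ and $(\tau+\sqrt{\tau^2-1})^{-n/m}$; ``optimizing in $r$'' does not replace this lemma, since the estimate is obtained by letting $r\to\infty$, not by choosing a finite optimal $r$.

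Second, the Jacobian bound in the timelike direction, which you flag as an anticipated obstacle but do not resolve, is precisely where the hypotheses are used. On $\mathcal B_r$ one needs $\Delta u+\la y,H\ra\le mf^{1/(m-1)}$, and the paper gets this from the chain $-\la\D f,\D u\ra+\la fH,y\ra\le|\D f||\D u|+f|H^{\bot,1}||y^{\bot,1}|+\tfrac12 f\|H^{\bot,2}\|\le\sqrt{|\D f|^2+f^2(|H^{\bot,1}|^2+\|H^{\bot,2}\|^2)}\cdot\sqrt{|\D u|^2+|y^{\bot,1}|^2+\tfrac14}$, which is $\le\sqrt{|\D f|^2+f^2(|H^{\bot,1}|^2+\|H^{\bot,2}\|^2)}$ exactly because $\mathcal U$ is defined with the radii $|\D u|^2+|y^{\bot,1}|^2<\tfrac14$ and $|\la y^{\bot,2},\nu\ra|\le\tfrac12$. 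The timelike pairing $\la fH^{\bot,2},y^{\bot,2}\ra$ must be bounded using $\|H^{\bot,2}\|=\sqrt{-|H^{\bot,2}|^2}$ and the constraint on $\la y^{\bot,2},\nu\ra$, not by a naive Cauchy--Schwarz (which fails for timelike vectors). These choices of $\tfrac12$ are also what force the slab half-widths $r/2$ in $\Lambda_r(x)$ and hence the $2^{-n/m}$ in the constant. Until both of these pieces are written down, the proof is not complete, though the architecture you propose is the correct one.
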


At the end of this work, we discovered that in \cite{Tsai2022} similar results were established. Nevertheless, compared to the results therein, our construction yields a Sobolev inequality without curvature terms for a mean convex hypersurface. The history of geometric inequalities probably dates back to ancient Greece. The classical isoperimetric inequality asserts that for a domain $\Sigma\subseteq\R^n$ with sufficiently well-behaved boundary, there holds 
\beq\label{eq:TheIsoperimetricInequality}
|\p\Sigma|\geq n\omega_n^{\frac{1}{n}}|\Sigma|^{\frac{n-1}{n}}. 
\eeq
It is known that such an isoperimetric inequality is essentially equivalent to the $W^{1,1}$ Sobolev inequality for domains in Euclidean space: 
\beq\label{eq:1.4new}
\int_\Sigma|\D f|+\int_{\p\Sigma} f \geq n\omega_n^{\frac{1}{n}}\left( \int_\Sigma f^\frac{n}{n-1}  \right)^\frac{n-1}{n}. 
\eeq

Intensive work has been established to extend \cref{eq:TheIsoperimetricInequality} or \cref{eq:1.4new} to more general settings. It has long been conjectured that the same inequality \cref{eq:TheIsoperimetricInequality} holds in Cartan-Hadamard manifolds \cite{Aubin1976}. 
Partial results include \cite{weil1926surfaces,Beckenbach1933, Croke1984, Kleiner1992}. See also \cite{ghomiTotalCurvatureIsoperimetric2021} for a recent attempt to resolve the conjecture. It is also possible to replace areas and volumes in \cref{eq:TheIsoperimetricInequality} by more general quermassintegrals. The resulting isoperimetric inequality is proved for hypersurfaces with certain convexity in Euclidean space. See \cite{Guan2012} for details.

For a domain $\Sigma$ in a two dimensional space form of constant curvature $K$, there is a neat result which states that
\beq\label{eq:TwoDimensionalSpaceForms}
4\pi |\Sigma|\leq |\p\Sigma|^2+K|\Sigma|^2. 
\eeq
Please see \cite{Choe2005} and references therein. The same inequality \cref{eq:TwoDimensionalSpaceForms} is proved by Choe and Gulliver \cite{Choe1992} for minimal surfaces $\Sigma^2$ with certain topological constraints in hyperbolic space $\mathbb H^n$. Yau \cite{Yau1975}, Choe and Gulliver \cite{Let1992} showed that if $\Sigma$ is a domain in $\mathbb H^n$ or a $n$-dimensional minimal submanifold in $\mathbb H^{n+m}$, then it satisfies the linear isoperimetric inequality
\[
(n-1)|\Sigma|\leq|\p\Sigma|. 
\]
Another linear inequality for proper minimal submanifolds in $\mathbb H^n$ is obtained in \cite{Min2014} using Poincar\'e model. 

It is a longstanding conjecture that \cref{eq:TheIsoperimetricInequality} holds true for minimal hypersurfaces in $\R^{n+1}$. Using the method of sliding, Brendle fully settled the problem in a recent work \cite{brendle2021}, and later extended his results to Riemannian manifolds with nonnegative Ricci curvature \cite{Brendle2020}. The most classical application of the sliding method is perhaps Aleksandrov's maximum principle. In  \cite{Cabre2008} Cabr\'e first employed the sliding method and gave a simple and elegant proof of \cref{eq:TheIsoperimetricInequality}. 

We follow Brendle's method and apply it to submanifolds in the Minkowski space, and obtain some Michael-Simon-Sobolev type inequalities. 

\textbf{Acknowledgements.} The author would like to thank Research Professor Qi-Rui Li for his instructions and many helpful discussions.

\section{Notations and Preliminaries}\label{sec:2}

Let $\R^{n,1}$ be the Minkowski space endowed with metric 
\[
\bar g=-dx_0^2+dx_1^2+\cdots+dx_n^2. 
\]
The usual Euclidean metric of $\R^{n+1}$ is denoted by $\delta$. The volume element of $\R^{n,1}$, $d\mu_{\bar g}=\sqrt{-\det\bar g} dX$, is just the usual Lebesgue measure. A vector $Y$ is called unit if $|Y|^2=\sigma(Y)$, where $\sigma(Y)$ is the signature of $Y$, i.e. $\sigma(X)=1,-1,0$ if $X$ is spacelike, timelike, lightlike, respectively. Finally, we define $\|Y\| = \sqrt{\sigma(Y)|Y|^2}$. 

Let $(\Sigma^n,g)\hookrightarrow(\R^{n,1},\bar g)$ be a spacelike hypersuface. We denote by $x$ a point in $\Sigma$, and by $X(x)$ the corresponding position vector in $\R^{n,1}$. Anything with a `bar' is a quantity of  the ambient space. Then the second fundamental form is defined by 
\[
\bar\nabla_{Y}Z=\nabla_{Y}Z-h(Y,Z), \quad \forall Y,Z\in \mathscr X(\Sigma). 
\]
Let $\nu(x)$ be a normal to $\Sigma$ at the point $x$. Clearly $\nu(x)$ is also a vector in $\R^{n,1}$. Denote by $\nu_\alpha(x)$ the $\alpha$-th coordinate of $\nu(x)$ in $\R^{n,1}$, so that
\[
|\nu|^2=-\nu_0^2+\nu_1^2+\cdots+\nu_n^2. 
\]
\begin{defn}\label{def:2.1}
Associated to a spacelike submanifold $\Sigma^m\hookrightarrow\R^{n,1}$, the maximal slope is defined by
$
\tau(\Sigma) = \sup \{ |\nu_0(x)|: x\in\Sigma, \nu(x)\in T_x^\bot\Sigma, |\nu(x)|^2=-1 \}. 
$
\end{defn}
The quantity $\tau$ characterizes how `lightlike' $\Sigma$ is; for instance, $\tau=1$ if $\Sigma^n\Seq\R^n$; and $\tau$ is uniquely determined by the diameter if $\Sigma^n\Seq\mathbb H^n$.

\begin{lem}\label{lem:21}
For any function $w$ on the ambient space, $\nabla^2 w=\bar\nabla^2 w - \la h,\bar\nabla w \ra_{\bar g}$. 
\end{lem}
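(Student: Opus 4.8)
The plan is to compute both sides in a local orthonormal frame on $\Sigma$ and compare them using the defining relation for the second fundamental form. Fix a point $x\in\Sigma$ and extend a $\bar g$-orthonormal frame $\{e_1,\dots,e_n\}$ for $T_x\Sigma$ by parallel transport along geodesics emanating from $x$, so that $\nabla_{e_i}e_j = 0$ at $x$. Since $\Sigma$ is spacelike, this is also a genuine Riemannian orthonormal frame for $g$. For a function $w$ on the ambient space, restricted to $\Sigma$ it is just $w\circ X$, and I would write
\beq\label{eq:hess-plan}
\nabla^2 w(e_i,e_j) = e_i(e_j w) - (\nabla_{e_i}e_j)w = e_i(e_j w)
\eeq
at $x$, where $e_j w$ means the derivative of the ambient function in the direction of $e_j$ viewed as a vector in $\R^{n,1}$.

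Next I would expand the ambient Hessian the same way, but being careful that $e_i,e_j$ are tangent to $\Sigma$ only along $\Sigma$, so $\bar\nabla_{e_i}e_j$ need not vanish: by the definition of $h$ in the excerpt, $\bar\nabla_{e_i}e_j = \nabla_{e_i}e_j - h(e_i,e_j) = -h(e_i,e_j)$ at $x$. Hence
\beq\label{eq:ambient-hess-plan}
\bar\nabla^2 w(e_i,e_j) = e_i(e_j w) - (\bar\nabla_{e_i}e_j)w = e_i(e_j w) + h(e_i,e_j)\,w,
\eeq
where $h(e_i,e_j)\,w$ should be read as $\bar\nabla w$ paired with the normal vector $h(e_i,e_j)$, i.e. $\la h(e_i,e_j),\bar\nabla w\ra_{\bar g}$. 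Subtracting \eqref{eq:ambient-hess-plan} from \eqref{eq:hess-plan} gives $\nabla^2 w(e_i,e_j) = \bar\nabla^2 w(e_i,e_j) - \la h(e_i,e_j),\bar\nabla w\ra_{\bar g}$, which is the claimed identity since both sides are tensorial and the frame was arbitrary at $x$.

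The only real subtlety — and the step I would present most carefully — is the bookkeeping of which covariant derivative and which gradient appear where: $\nabla^2 w$ on the left is the intrinsic Hessian of the restricted function with respect to the induced connection on $\Sigma$, while $\bar\nabla^2 w$ on the right is the ambient Hessian evaluated on vectors tangent to $\Sigma$, and $\bar\nabla w$ is the full ambient gradient (whose normal component is precisely what gets contracted against $h$). Everything else is the standard Gauss-formula manipulation; no analytic estimates are needed, so there is no genuine obstacle, only the need to keep the spacelike/Lorentzian signature conventions consistent with those fixed in \cref{sec:2}. One should also note that the formula holds verbatim whether $w$ is a coordinate function $x_\alpha$ or a general smooth function, which is what makes it convenient to apply later to the coordinate functions of the position map.
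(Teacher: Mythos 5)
Your proof is correct and follows essentially the same route as the paper: compute both Hessians in an adapted frame, substitute the Gauss formula $\bar\nabla_{e_i}e_j=\nabla_{e_i}e_j-h(e_i,e_j)$ to identify the discrepancy as $\la h,\bar\nabla w\ra_{\bar g}$, and conclude by tensoriality. Your choice of a frame with $\nabla_{e_i}e_j=0$ at the point is a minor simplification over the paper's Christoffel-symbol bookkeeping (and in fact sidesteps a sign slip in the paper's intermediate line), but the argument is the same.
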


\begin{proof}
We assume that $T_x\R^{n,1}$ is spanned by orthonormal basis $\{\eps_\alpha: 0\leq \alpha\leq n\}$, and $T_x\Sigma$ by $\{\eps_i: 1\leq i\leq n\}$, with $|\eps_i|^2=1, |\eps_0|^2=-1$. We then compute 
\begin{align*}
\bar\nabla^2_{ij}w &= \p^2_{ij}w-\la \bar\D_{\eps_i}\eps_j,\bar\D w \ra\\
&= \p^2_{ij}w-\la \bar\Gamma_{ij}^k\eps_k+\bar\Gamma_{ij}^2\eps_0,w^\ell\eps_\ell+w^0\eps_0 \ra \\
&= \p^2_{ij}w-\bar\Gamma_{ij}^k\p_kw+\bar\Gamma_{ij}^0\p_0w. 
\end{align*}
By Gauss formula $\bar\nabla_{\eps_i}\eps_j=\nabla_{\eps_i}\eps_j-h_{ij}\eps_0$ we see that $\bar\Gamma_{ij}^k=\Gamma_{ij}^k$ and $\bar\Gamma_{ij}^0=-h_{ij}$. Hence
\[
\bar\nabla^2_{ij}w=\nabla^2_{ij}w-h_{ij}\p_0w=\nabla^2_{ij}w+\la h(\eps_i,\eps_j),\bar\nabla w \ra
\]
By tensorality, the same formula holds in any coordinate systems.
\end{proof}

 \section{Proof of \cref{thm:11}}\label{sec:3}

Since \cref{eq:11} is homogeneous in $f$, by normalization we may assume that
\beq\label{eq:31}
\int_{\p\Sigma} f = n\int_\Sigma f^{\frac{n}{n-1}}-\int_\Sigma|\nabla f|. 
\eeq
Let $\eta$ be the outward unit normal of $\p\Sigma$. Consider the following PDE
\beq\label{eq:32}
\begin{cases}
\operatorname{div}(f\nabla u) = nf^{\frac{n}{n-1}} - |\nabla f|, & \text{ in } \Sigma\setminus\p\Sigma,\\
\la \nabla u,\eta \ra_{\bar g} = 1, & \text{ along } \p\Sigma. 
\end{cases}
\eeq
By our normalization, the equation has a solution $u\in C^{2,\alpha}$. Since $\Sigma$ is mean convex, we may assume that the mean curvature vector $H$ is either zero or timelike pointing to the past. Let $\nu$ be the unit normal and timelike vector field pointing to the past. Now fix $r>0$. For any $x\in\Sigma$ we define $\mathcal A_{r}=\cap_{x\in\Sigma}\Lambda_{r}(x)$, where
\begin{align}\label{eq:3.3new}
\Lambda_{r}(x)=\left\{ p\in\R^{n,1}: |(p-X(x))^\top|^2<r^2, -r\leq\la p-X,\|H\|^{-1}H(x) \ra\leq0 \right\}. 
\end{align}
Note that when $\|H\|=0$, the notation $\|H\|^{-1}H(x)$ simply means the normal $\nu(x)$. 

\begin{figure}[t]
\centering
\includegraphics[scale=0.6]{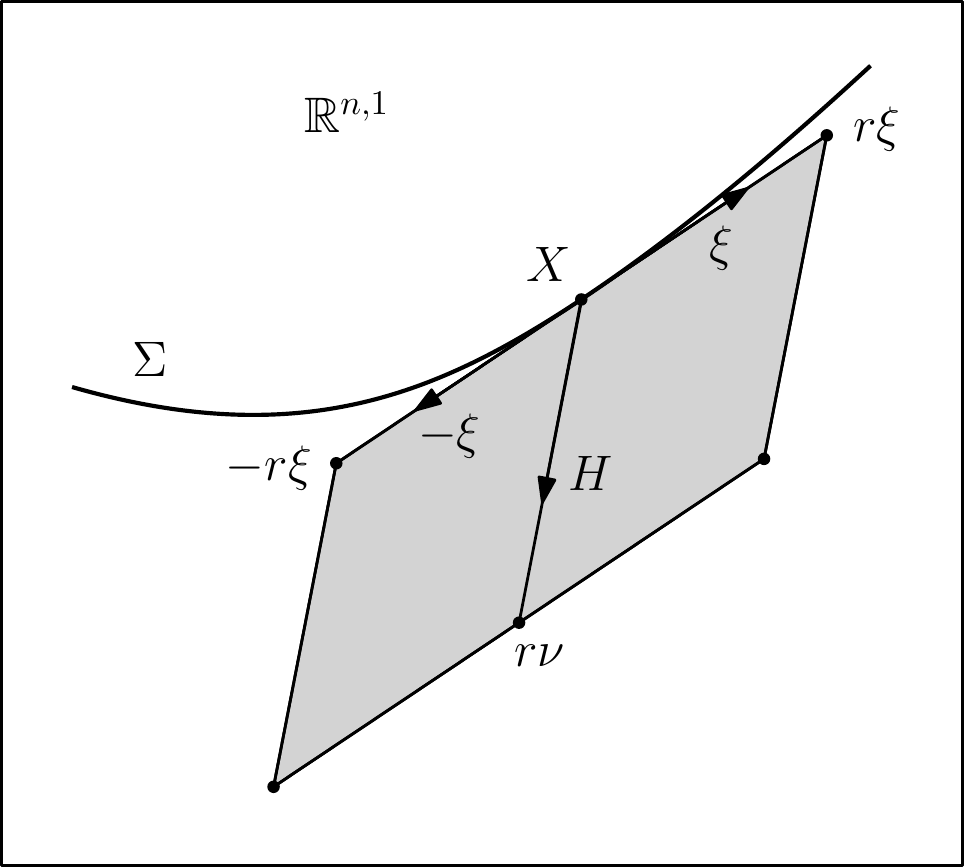}
\caption{An illustration of $\Lambda_{r}(x)$.}
\label{fig:1}
\end{figure}

We write $p-X=s\xi+t\nu$, where $\xi$ is a unit tangent vector and $\nu$ is a unit normal vector pointing to the past. The the definition of $\Lambda_r(x)$ implies that $-r<s<r,0\leq t\leq r$. Therefore $\Lambda_r(x)$ is a parallelogram illustrated as in \cref{fig:1}. We continue and define 
\begin{align*}
&\mathcal U=\left\{ (x,y): x\in\Sigma\setminus\p\Sigma, y\in T^{\bot}_x\Sigma, |\nabla u(x)|<1, -1\leq\la y,\|H\|^{-1}H(x) \ra\leq0 \right\}, \\
&\mathcal B_{r} = \left\{ (x,y)\in\mathcal U: r\nabla^2u(x)+r\la h(x),y \ra+g(x)\geq0 \right\}. 
\end{align*}
Finally, we take the map $\Phi_r:T^\bot\Sigma\to\R^{n,1}$, 
\beq
\Phi_r(x,y)=X(x)+r(\nabla u(x)+y). 
\eeq

\begin{lem}\label{lem:31}
We have asymptotic behavior
\beq
\liminf_{r\to\infty}r^{-n-1}{|\mathcal A_r|} \geq \tilde C_{n,\tau}, 
\eeq
where $|\mathcal A_r|$ is the usual Lebesgue measure in $\R^{n+1}$ and $\tilde C_{n,\tau} = \frac{\omega_n}{(n+1)\tau(\tau+\sqrt{\tau^2-1})^n}$. 
\end{lem}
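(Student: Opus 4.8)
The plan is to estimate $|\mathcal A_r|$ from below by exhibiting, for each $r$, a large explicit region contained in $\mathcal A_r = \bigcap_{x\in\Sigma}\Lambda_r(x)$. Since $\Sigma$ is compact, fix a point $x_0\in\Sigma$ where the relevant supremum defining $\tau$ is (nearly) attained, or more simply work at an arbitrary interior point and translate so that $X(x_0)=0$. The key geometric observation is that for a point $p$ lying on the ray emanating from $X(x_0)$ in the timelike direction $\nu(x_0)$ (the past-pointing unit normal), $p$ is ``deep inside'' every slab $\Lambda_r(x)$ provided $r$ is large: the tangential component $(p-X(x))^\top$ grows linearly in $|p|$ but with a coefficient controlled by how far $\nu(x_0)$ tilts away from $\nu(x)$, which is in turn controlled by $\tau$ through the bound $|\nu_0|\le\tau$; and the pairing $\langle p - X(x), \|H\|^{-1}H(x)\rangle$ stays in $[-r,0]$ on a segment of length comparable to $r$ because $\|H\|^{-1}H(x)$ is a past-pointing unit timelike vector, so it pairs with the past-pointing timelike $p$ with a definite sign and magnitude bounded below in terms of $\tau$.

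First I would make the dependence on $\tau$ quantitative. For unit timelike vectors $v,w$ (both past-pointing, $|v|^2=|w|^2=-1$) with $|v_0|,|w_0|\le\tau$, one has $-\langle v,w\rangle \ge$ some explicit function of $\tau$; a short computation with the reverse Cauchy--Schwarz inequality in Lorentzian signature gives $-\langle v,w\rangle \le \tau+\sqrt{\tau^2-1}$ and $-\langle v, w\rangle\ge 1$, and these are exactly the quantities appearing in $\tilde C_{n,\tau}$. Using this, for $p = \rho\,\nu(x_0)$ with $\rho>0$ one checks $\langle p - X(x), \|H\|^{-1}H(x)\rangle = -\rho\langle \nu(x_0), \|H\|^{-1}H(x)\rangle + O(\diam\Sigma)$, which lies in $[-r,0]$ once $\rho \le r/(\tau+\sqrt{\tau^2-1})$ and $r$ is large enough to absorb the bounded error. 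Similarly the spacelike condition $|(p-X(x))^\top|^2 < r^2$ holds on a ball: writing $p - X(x) = p - X(x) $ and projecting, the tangential part has length at most $|p|\cdot(\text{tilt}) + \diam\Sigma$, so one gets a genuine $n$-dimensional ball's worth of admissible $p$ transverse to the $\nu(x_0)$-direction, of radius $\sim r/(\tau+\sqrt{\tau^2-1})$ (the tilt of the tangent plane is again governed by $\tau+\sqrt{\tau^2-1}$), times an interval of length $\sim r/(\tau+\sqrt{\tau^2-1})$ in the timelike direction. Multiplying, $|\mathcal A_r| \gtrsim \omega_n \big(r/(\tau+\sqrt{\tau^2-1})\big)^n \cdot \big(r/(\tau+\sqrt{\tau^2-1})\big)\cdot\frac{1}{n+1}$, where the $\frac{1}{n+1}$ comes from the volume of the relevant solid region (a cone or prism) rather than the full product; taking $r\to\infty$ kills the bounded error terms and yields exactly $\tilde C_{n,\tau} = \omega_n/((n+1)\tau(\tau+\sqrt{\tau^2-1})^n)$.

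The main obstacle I anticipate is getting the constant \emph{exactly} right rather than merely up to a dimensional factor — in particular tracking the extra factor $\tau^{-1}$ (as opposed to a power of $\tau+\sqrt{\tau^2-1}$) and the $(n+1)^{-1}$. This requires being careful about which region is actually contained in $\mathcal A_r$: it is not a product of a ball and an interval but the intersection over all $x$, which near the optimal direction degenerates into something like a spherical cap or a cone whose solid angle/height tradeoff produces the $(n+1)^{-1}$ and whose ``waist'' is pinched by the worst-case tilt, producing the asymmetric powers of $\tau$ versus $\tau+\sqrt{\tau^2-1}$. A clean way to organize this is to parametrize $\mathcal A_r$ in ``polar-like'' coordinates adapted to a fixed past-pointing unit timelike vector realizing $\tau$, bound the radial extent in each direction by the two-vector estimate above, and integrate; the $\liminf$ then lets me discard all lower-order-in-$r$ corrections coming from $\diam\Sigma$ and from the variation of $\nabla u$ and $h$, which is why only the ambient Lorentzian geometry and $\tau$ survive in the limit.
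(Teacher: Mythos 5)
Your overall strategy coincides with the paper's: discard the $O(\diam\Sigma)$ errors by letting $r\to\infty$ (the paper organizes this as a blow-down by the factor $r$, under which $\Sigma$ collapses to the origin and each $\Lambda_r(x)$ converges to a model region $\tilde\Lambda(x)=\{|\tilde p^\top|^2<1,\ -1<\la\tilde p,\nu(x)\ra<0\}$), and then bound the volume of the limiting intersection from below by an explicit cone. So the route is the same in spirit. However, the step you flag as "the main obstacle" is in fact the entire content of the lemma, and as written your argument does not close it; it would only deliver the weaker constant $\omega_n(n+1)^{-1}(\tau+\sqrt{\tau^2-1})^{-(n+1)}$.

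Two concrete problems. First, your two-vector estimate is not the right one: for two past-pointing unit timelike vectors $v,w$ each with $|v_0|,|w_0|\leq\tau$, the sharp bound is $-\la v,w\ra\leq\cosh(2\operatorname{arccosh}\tau)=2\tau^2-1$, not $\tau+\sqrt{\tau^2-1}$; the quantity $\tau+\sqrt{\tau^2-1}=e^{\operatorname{arccosh}\tau}$ arises not from pairing two normals but from intersecting an extremal \emph{null} ray with the extremal hyperplane $\{\la\tilde p,\nu\ra=-1\}$. Relatedly, centering your region on a ray in the direction $\nu(x_0)$ is the wrong choice of axis. Second, and consequently, your "ball times interval, both of radius $\sim r/(\tau+\sqrt{\tau^2-1})$" heuristic cannot produce the asymmetric powers $\tau^{-1}(\tau+\sqrt{\tau^2-1})^{-n}$. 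The resolution in the paper is to replace each $\tilde\Lambda(x)$ by the smaller set $\tilde{\mathcal C}(x)=\{|\tilde p|^2<0,\ -1<\la\tilde p,\nu(x)\ra<0\}$ (contained in $\tilde\Lambda(x)$ because $|\tilde p^\top|^2=|\tilde p|^2+\la\tilde p,\nu\ra^2<1$ there), so that $\tilde{\mathcal A}\supseteq\tilde{\mathcal C}=\{|\tilde p|^2<0\}\cap\bigcap_x\{-1\leq\la\tilde p,\nu(x)\ra\leq 0\}$. This set is centered on the $x_0$-axis: it is a double cone over an $(n{-}1)$-sphere of radius $\tau-\sqrt{\tau^2-1}=(\tau+\sqrt{\tau^2-1})^{-1}$ (where the past light cone from the origin meets the worst-case hyperplane $\{\la\tilde p,\nu\ra=-1\}$ with $\nu=(-\tau,-\sqrt{\tau^2-1},0,\dots,0)$), with vertices at the origin and at $b=(-\tau^{-1},0,\dots,0)$ (the constraint $\la\tilde p,\nu\ra\geq-1$ along the time axis, for that same extremal $\nu$, forces $|\tilde p_0|\leq\tau^{-1}$). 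Its volume, $\frac{1}{n+1}\cdot\omega_n(\tau+\sqrt{\tau^2-1})^{-n}\cdot\tau^{-1}$, is exactly $\tilde C_{n,\tau}$; the height $\tau^{-1}$ and the waist radius $(\tau+\sqrt{\tau^2-1})^{-1}$ come from two different extremal computations, which is precisely the asymmetry you could not account for. Without identifying this region and carrying out this computation, the proposal does not prove the stated constant.
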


\begin{proof}
We blow down $\mathcal A_r$ by factor $r$. As $r\to\infty$, the bounded domain $\Sigma$ collapses to a single point: the origin, and each $\Lambda_r(x)$ converges to a $\tilde\Lambda(x)$, specified by
\[
\tilde\Lambda(x) = \{ \tilde p\in \R^{n,1}: |\tilde p^\top|^2<1, -1<\la \tilde p,\nu(x) \ra<0 \}, 
\]
where $\nu(x)\in-\mathbb H^n$ is the unit normal to $\Sigma$ at $x$ pointing to the past. See the left of \cref{fig:2} for an illustration. Clearly $\tilde\Lambda(x)$ contains the cone $\tilde{\mathcal C}(x)=\{\tilde p\in\R^{n,1}:|\tilde p|^2<0, -1<\la \tilde p,\nu(x) \ra<0\}$. 
\begin{figure}[t]
\centering
\includegraphics[scale=0.6]{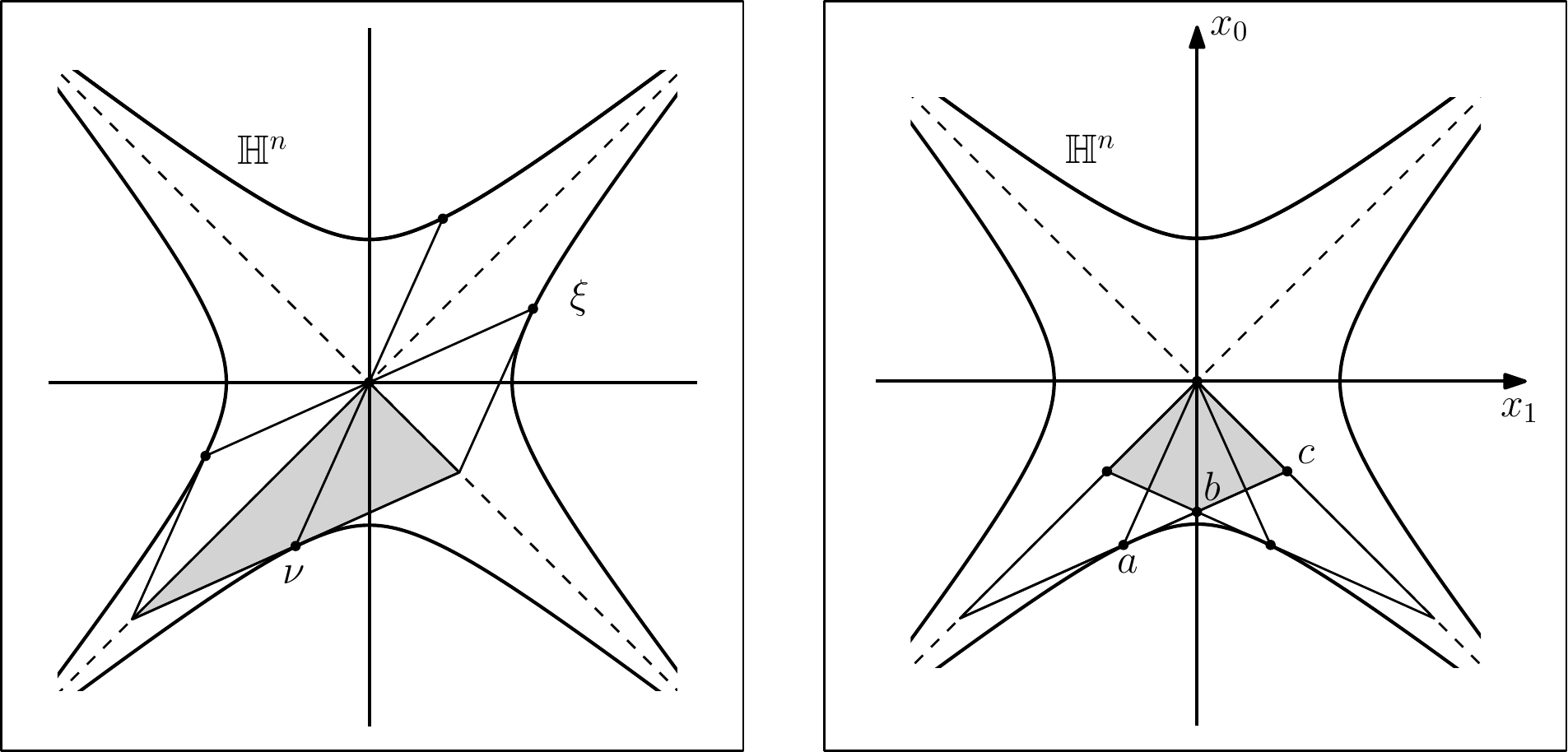}
\caption{Left: blowing down $\Lambda(x)$ to get $\tilde\Lambda(x)$, which contains $\tilde{\mathcal C}(x)$, the shaded area. Right: $\tilde{\mathcal A}$ contains at least $\tilde{\mathcal C}$, the shaded area.}
\label{fig:2}
\end{figure}
Therefore $\tilde{\mathcal A}=\cap_{x\in\Sigma}\tilde\Lambda(x)$ contains at least $\tilde {\mathcal C} = \cap_{x\in\Sigma}\tilde{\mathcal C}(x)$. Since by assumption $\nu(x)$ has minimal height $-\tau$, we may assume that $\tilde{\mathcal C}$ is the union of two cones, as illustrated by the right of \cref{fig:2}, with the zeroth coordinate of point $a$ being $-\tau$. 

We now proceed by computing the volume of $\tilde{\mathcal C}$. Without loss of generality, we assume that the points $a,b,c$ lie in the plane $Ox_0x_1$. Then $a=(-\tau,-\sqrt{\tau^2-1},0,\cdots,0)$, and the tengential $\vec{ac}$ is parallel to $\xi=(\sqrt{\tau^2-1},\tau,0,\cdots,0)$. From this we readily obtain $b=(-\tau^{-1},0,\cdots,0)$ and $c=(-\tau+\sqrt{\tau^2-1},\tau-\sqrt{\tau^2-1},0,\cdots,0)$. Consequently 
\[
|\tilde{\mathcal C}| = \omega_n(\tau-\sqrt{\tau^2-1})^n\cdot \tau^{-1}\cdot(n+1)^{-1} = \tilde C_{n,\tau}
\]
Hence $\liminf_{r\to\infty}r^{-n-1}|\mathcal A_r|\geq|\tilde{\mathcal C}|=\tilde C_{n,\tau}$. 
%
\end{proof}

\begin{lem}
There holds $\Phi_r(\mathcal B_{r})\supseteq\mathcal A_{r}$. 
\end{lem}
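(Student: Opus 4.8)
The plan is to run the Aleksandrov--Bakelman--Pucci / contact-point argument of \cite{brendle2021}, adapted to the Lorentzian ambient metric. Fix $p\in\mathcal A_r$ and consider the function $\phi_p\colon\Sigma\to\R$ given by $\phi_p(x)=u(x)+\frac{1}{2r}|X(x)-p|^2_{\bar g}$; since $u\in C^{2,\alpha}$ and $\Sigma$ is compact, $\phi_p$ attains its minimum at some $\bar x\in\Sigma$. The first step is to show that $\bar x$ is an interior point. If $\bar x\in\p\Sigma$, the derivative of $\phi_p$ in the inward direction $-\eta$ is nonnegative, which gives $\la\nabla u(\bar x),\eta\ra+\frac1r\la X(\bar x)-p,\eta\ra\le0$; invoking the Neumann condition $\la\nabla u,\eta\ra=1$ from \cref{eq:32} this becomes $\la p-X(\bar x),\eta\ra\ge r$. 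But $g=\bar g|_\Sigma$ is positive definite since $\Sigma$ is spacelike, so Cauchy--Schwarz yields $\la p-X(\bar x),\eta\ra=\la(p-X(\bar x))^\top,\eta\ra\le|(p-X(\bar x))^\top|<r$, the last inequality being part of $p\in\Lambda_r(\bar x)$; this contradiction forces $\bar x\notin\p\Sigma$.

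At the interior minimum, the first-order condition $\nabla\phi_p(\bar x)=0$ reads $r\nabla u(\bar x)=(p-X(\bar x))^\top$. I then set $y:=\frac1r(p-X(\bar x))^\bot\in T^\bot_{\bar x}\Sigma$, so that $\Phi_r(\bar x,y)=X(\bar x)+r\nabla u(\bar x)+ry=X(\bar x)+(p-X(\bar x))^\top+(p-X(\bar x))^\bot=p$. Membership $(\bar x,y)\in\mathcal U$ then follows directly from $p\in\Lambda_r(\bar x)$: the bound $|(p-X(\bar x))^\top|^2<r^2$ gives $|\nabla u(\bar x)|^2<1$, while dividing $-r\le\la p-X(\bar x),\|H\|^{-1}H(\bar x)\ra\le0$ by $r$ gives $-1\le\la y,\|H\|^{-1}H(\bar x)\ra\le0$, since $\|H\|^{-1}H(\bar x)$ is a normal vector and hence only the normal part of $p-X(\bar x)$ contributes.

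It remains to check $(\bar x,y)\in\mathcal B_r$, which is the second-order information at the minimum. Let $w(q)=|q-p|^2_{\bar g}$; since $\R^{n,1}$ is flat we have $\bar\nabla w=2(X-p)$ and $\bar\nabla^2 w=2\bar g$, so \cref{lem:21} gives $\nabla^2 w=2g-2\la h,(X-p)^\bot\ra_{\bar g}=2g+2r\la h,y\ra$, the last equality because $h$ is normal-valued and $(p-X(\bar x))^\bot=ry$. Hence $0\le\nabla^2\phi_p(\bar x)=\nabla^2 u(\bar x)+\frac1r g+\la h(\bar x),y\ra$, and multiplying by $r>0$ yields precisely $r\nabla^2 u(\bar x)+r\la h(\bar x),y\ra+g\ge0$, i.e. $(\bar x,y)\in\mathcal B_r$. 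Since $\Phi_r(\bar x,y)=p$, we conclude $\mathcal A_r\seq\Phi_r(\mathcal B_r)$. The only genuinely delicate point is bookkeeping the signs when passing between $\bar g$-inner products and the tangential--normal splitting — in particular that the induced metric on the spacelike $\Sigma$ is Riemannian (so the boundary Cauchy--Schwarz step is valid) and that $h$ takes values in the normal bundle (so $\la h,\bar\nabla w\ra_{\bar g}$ only sees $(X-p)^\bot$); the required analytic input, solvability of \cref{eq:32} in $C^{2,\alpha}$, is already supplied by the normalization \cref{eq:31}.
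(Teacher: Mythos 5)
Your argument is correct and is essentially the paper's own proof: the same contact function (yours is $F/r$), the same boundary exclusion via the Neumann condition and Cauchy--Schwarz in the induced Riemannian metric, the same first-order identification of $y$, and the same use of \cref{lem:21} to convert the second-order minimum condition into membership in $\mathcal B_r$. The only difference is that you spell out the computation of $\bar\nabla^2|X-p|^2$ explicitly, which the paper leaves implicit.
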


\begin{proof}
For any given $p\in\mathcal A_r$, consider the function
\[
F(x)=ru(x)+\frac{1}{2}|p-X(x)|^2, \quad x\in\Sigma. 
\]
By compactness $F$ attains its minimum at some $\bar x\in\Sigma$. We claim that $\bar x\notin\p\Sigma$. For if otherwise $\bar x\in\p\Sigma$, then at this point
$
0\geq\la \nabla F(\bar x),\eta \ra = r - \la p-X(\bar x),\eta \ra. 
$
We have by definition of $\mathcal A_r$ that $\la p-X(\bar x),\eta \ra = \la p-X(\bar x)^\top,\eta \ra<r$, a contradiction. Hence $\bar x\in\Sigma\setminus\p\Sigma$ and at which $\nabla F(\bar x)=r\nabla u(\bar x)-(p-X(\bar x))^\top=0$. We then find $\bar y\in T_{\bar x}^\bot\Sigma$ such that $r\bar y=(p-X(\bar x))^\bot$. Obviously $r|\nabla u(\bar x)|=|(p-X(\bar x))^\top|<r$ and $-r\|H\|\leq r\la \bar y,H(\bar x) \ra=\la p-X(\bar x),H(\bar x) \ra\leq0$. Finally, by \cref{lem:21},
\begin{align*}
0 &\leq \D^2F(\bar x) = r\D^2u(\bar x) + \bar\D^2\left(\frac{1}{2}|p-X(\bar x)|^2\right) - \left\la h,\bar\D\left(\frac{1}{2}|p-X(\bar x)|^2\right) \right\ra \\
&= r\D^2u(\bar x) + g(\bar x) - \la h,X(\bar x)-p \ra \\
&= r\D^2u(\bar x) + g(\bar x) + r\la \bar y,h \ra, 
\end{align*}
completing the proof. 
\end{proof}

In the Riemannian or Lorentzian setting, in order for the area formula to be true, the Jacobian of a map should be modified with volume elements; that is, 
\[
J(\Phi_r) = |\det(D\Phi_r)|\cdot\frac{\sqrt{-\det\bar g}}{\sqrt{\det g}}, 
\]
where $D\Phi_r$ is the usual tangent map of the coordinate map. 

\begin{lem}
The invariant Jacobian of $\Phi_r$ is given by
\[
J(\Phi_r)=r^{n+1}\det\left(\nabla_i\D^ju(x)+\la y,h_i^j \ra+\delta_i^j/r\right). 
\]
\end{lem}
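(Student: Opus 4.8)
The plan is to compute the differential $D\Phi_r$ directly in a frame adapted to $\Sigma$ and its normal bundle, so that the claimed determinant falls out of a block lower triangular matrix. First I would fix a point $(\bar x,\bar y)\in T^\bot\Sigma$, write $\bar y=\bar t\,\nu(\bar x)$ with $\nu$ the global past pointing unit normal, and choose coordinates $(x^1,\dots,x^n,t)$ on $T^\bot\Sigma$ near $(\bar x,\bar y)$, where the $x^i$ are normal coordinates on $\Sigma$ centred at $\bar x$ and $t$ is the fibre coordinate determined by $y=t\nu(x)$. At $\bar x$ the frame $\{\varepsilon_i=\partial_iX\}$ is $g$-orthonormal, $\{\varepsilon_1,\dots,\varepsilon_n,\nu\}$ is $\bar g$-orthonormal of signature $(n,1)$ and hence a unit volume frame for $d\mu_{\bar g}$ (the Lebesgue measure), and $\{\partial_1,\dots,\partial_n,\partial_t\}$ is unit volume for $\dvol_g\wedge dt$ on $T^\bot\Sigma$; note that $\|\nu\|=1$, so $t$ is the $\|\cdot\|$-arclength along the fibre. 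In such frames the invariant Jacobian is simply $|\det D\Phi_r|$, since $\sqrt{-\det\bar g}=\sqrt{\det g}=1$; in arbitrary coordinates these factors recombine with the coordinate determinant to give the same frame independent number.

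Next I would differentiate $\Phi_r(x,t)=X(x)+r\nabla u(x)+rt\,\nu(x)$ at $(\bar x,\bar t)$. The fibre derivative is $\partial_t\Phi_r=r\,\nu(\bar x)$. For the tangential derivatives, $\partial_iX=\varepsilon_i$; the Gauss formula $\bar\nabla_{\varepsilon_i}\varepsilon_j=\nabla_{\varepsilon_i}\varepsilon_j-h(\varepsilon_i,\varepsilon_j)$ gives $\partial_i\partial_jX=-h_{ij}\nu$ at $\bar x$; and differentiating $|\nu|^2\equiv-1$ shows $\partial_i\nu$ is tangential with $\langle\partial_i\nu,\varepsilon_j\rangle=-\langle\nu,\bar\nabla_{\varepsilon_i}\varepsilon_j\rangle=-h_{ij}$, i.e. $\partial_i\nu=-h_i^k\varepsilon_k$. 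Since the $g^{jk}$ are covariantly constant at $\bar x$, this gives
\[
\partial_i\Phi_r=\bigl(\delta_i^j+r\,\nabla_i\nabla^j u-r\bar t\,h_i^j\bigr)\varepsilon_j-r\,h_{ij}\nabla^j u\,\nu .
\]
Because $h(\varepsilon_i,\varepsilon_k)=h_{ik}\nu$ and $\langle\nu,\nu\rangle=-1$, one has $\langle y,h_i^j\rangle=\bar t\,g^{jk}\langle\nu,h(\varepsilon_i,\varepsilon_k)\rangle=-\bar t\,h_i^j$, so the tangential coefficient above equals $\delta_i^j+r\bigl(\nabla_i\nabla^j u+\langle y,h_i^j\rangle\bigr)$.

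Finally I would assemble $D\Phi_r$ in the bases $\{\partial_i,\partial_t\}\to\{\varepsilon_j,\nu\}$. Since $\partial_t\Phi_r=r\nu$ has no tangential component, the matrix is block lower triangular,
\[
D\Phi_r=\begin{pmatrix}\delta_i^j+r\nabla_i\nabla^j u+r\langle y,h_i^j\rangle & 0 \\ -r\,h_{ij}\nabla^j u & r\end{pmatrix},
\]
so $\det D\Phi_r=r\,\det\bigl(\delta_i^j+r\nabla_i\nabla^j u+r\langle y,h_i^j\rangle\bigr)=r^{n+1}\det\bigl(\nabla_i\nabla^j u+\langle y,h_i^j\rangle+\delta_i^j/r\bigr)$ after extracting $r$ from each of the first $n$ rows. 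Combined with the first paragraph this is $J(\Phi_r)$; on $\mathcal B_r$ the matrix is positive semidefinite, so its determinant is nonnegative and the absolute value may be dropped.

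I expect the only genuine obstacle to be sign bookkeeping in the Lorentzian setting: the identity $\langle\nu,\nu\rangle=-1$ produces the sign in $\langle y,h_i^j\rangle=-\bar t\,h_i^j$ and the extra $\nu$-component of $\partial_i\Phi_r$. That $\nu$-row never affects the determinant, by block triangularity, and $\dvol_g\wedge dt$ is exactly the volume element on the normal bundle that makes the ambient metric normalisations cancel in the orthonormal frame, so no further correction appears.
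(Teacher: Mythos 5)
Your proof is correct and follows essentially the same route as the paper's: fix a point, work in an adapted orthonormal frame with normal coordinates, differentiate $\Phi_r$ via the Gauss and Weingarten relations, and read off the determinant from the block-triangular structure. The only (harmless) difference is that you compute the component matrix of $D\Phi_r$ directly rather than the matrix of inner products $\la\p_a\Phi_r,E_b\ra$, which lets you track the Lorentzian sign and the discarded absolute value slightly more cleanly than the paper does.
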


\begin{proof}
At a fixed point $(x,y)$, we pick an orthonormal basis $\{e_i,\nu\}$ that spans $T_{(x,y)}(T^\bot\Sigma)$, and a normal coordinate system $\{x_i,y\}$ such that $\frac{\p}{\p x_i}=e_i,\frac{\p}{\p y}=\nu$ at $(x,y)$. Then $\Phi_r(x,y)=X+r\D u+ry\nu$. We now compute
\begin{align*}
&\la \frac{\p\Phi_r}{\p x_i},e_j \ra =  \la e_i+r\bar\D_{e_i}\D u+{ry\bar\D_{e_i}\nu},e_j \ra = \delta_{ij}+ru_{ij}-ryh_{ij}, \\
&\la \frac{\p\Phi_r}{\p y},e_j \ra = \la r\nu,e_j \ra = 0, \\
&\la \frac{\p\Phi}{\p y},\nu \ra = \la r\nu,\nu \ra = -r. 
\end{align*}
Note that we have used the fact that $\la \bar\D_{e_i}e_j,\nu \ra=\la -h_{ij}\nu,\nu \ra=h_{ij}$ and that $\la \bar\D_{e_i}\nu,e_j \ra=-\la \bar\D_{e_i}e_j,\nu \ra=-h_{ij}$. Thus $\det(D\Phi_r)=-r\det(ru_{ij}-ryh_{ij}+\delta_{ij})$, whence
\[
J(\Phi_r) = r^{n+1}\det(\nabla_i\D^ju+\la y,h_i^j \ra+\delta_i^j/r). 
\]
By tensorality the same formula holds in any coordinate system. 
\end{proof}

\begin{lem}\label{lem:34}
We have for any $(x,y)\in\mathcal B_{r}$
\[
\det\left(\nabla_i\D^ju(x)+\la y,h_i^j \ra+\delta_i^j/r\right) \leq \left(f^{\frac{1}{n-1}}+1/r\right)^n. 
\]
\end{lem}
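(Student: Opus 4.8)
The plan is to exploit the definition of the set $\mathcal B_r$ together with the AM--GM inequality, in the spirit of the ABP argument in \cite{brendle2021}. Fix $(x,y)\in\mathcal B_r$. Since $(x,y)\in\mathcal U$ we have $|\nabla u(x)|<1$, so we may evaluate the PDE \cref{eq:32} at $x$: the divergence term expands as $\operatorname{div}(f\nabla u)=f\Delta u+\langle\nabla f,\nabla u\rangle$, and using $|\nabla u|<1$ together with Cauchy--Schwarz we get $f\Delta u\geq nf^{\frac{n}{n-1}}-|\nabla f|-|\nabla f|\,|\nabla u|\geq nf^{\frac{n}{n-1}}-2|\nabla f|$; more to the point, I want the cleaner bound $\Delta u\geq nf^{\frac1{n-1}}-|\nabla f|/f - |\nabla f|/f \geq nf^{\frac1{n-1}} - 2|\nabla f|/f$. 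Actually the sharp route is: from $\operatorname{div}(f\nabla u)=nf^{n/(n-1)}-|\nabla f|$ and $\langle\nabla f,\nabla u\rangle\geq -|\nabla f|$ (since $|\nabla u|<1$), we obtain $f\,\Delta u\geq nf^{n/(n-1)}-|\nabla f|+|\nabla f|\cdot(\ldots)$ — I will sort the sign bookkeeping so that it yields $\operatorname{tr}\!\big(\nabla_i\nabla^j u(x)+\langle y,h_i{}^j\rangle\big)\le nf^{\frac1{n-1}}$, after also controlling the trace of the second fundamental form term by the mean convexity/sign condition $-1\le\langle y,\|H\|^{-1}H\rangle\le 0$ which forces $\langle y,H\rangle\le 0$, i.e. $\operatorname{tr}\langle y,h\rangle = \langle y,H\rangle\le 0$.

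Next, the matrix $M:=\nabla_i\nabla^j u(x)+\langle y,h_i{}^j\rangle+\delta_i^j/r$ is positive semidefinite precisely because $(x,y)\in\mathcal B_r$ (that is the defining condition $r\nabla^2 u+r\langle h,y\rangle+g\ge 0$, divided by $r$). Therefore all its eigenvalues $\lambda_1,\dots,\lambda_n$ are nonnegative, and by the AM--GM inequality
\[
\det M=\prod_{k=1}^n\lambda_k\le\left(\frac{1}{n}\sum_{k=1}^n\lambda_k\right)^{\!n}=\left(\frac{\operatorname{tr} M}{n}\right)^{\!n}.
\]
Since $\operatorname{tr} M=\Delta u(x)+\langle y,H(x)\rangle+n/r$, combining with the two estimates above — $\Delta u(x)\le nf^{\frac1{n-1}}$ (the direction of the inequality to be nailed down from \cref{eq:32}) and $\langle y,H(x)\rangle\le 0$ — gives $\operatorname{tr} M\le nf^{\frac1{n-1}}+n/r$, hence $\det M\le\big(f^{\frac1{n-1}}+1/r\big)^n$, which is exactly the claim.

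The main obstacle I anticipate is getting the one-sided bound on $\Delta u(x)$ with the correct constant and sign. The PDE gives information about $\operatorname{div}(f\nabla u)$, not $\Delta u$ directly, and one must absorb the $\langle\nabla f,\nabla u\rangle$ term using only $|\nabla u(x)|<1$; the inequality $\langle\nabla f,\nabla u\rangle\ge-|\nabla f|$ is what makes the $-|\nabla f|$ on the right-hand side of \cref{eq:32} combine favorably, so that $f\Delta u=nf^{n/(n-1)}-|\nabla f|-\langle\nabla f,\nabla u\rangle\le nf^{n/(n-1)}-|\nabla f|+|\nabla f| = nf^{n/(n-1)}$, i.e. $\Delta u\le nf^{1/(n-1)}$. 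The rest is purely algebraic: the positivity of $M$ from the definition of $\mathcal B_r$, the identification of $\operatorname{tr}\langle y,h\rangle$ with $\langle y,H\rangle$, the sign $\langle y,H\rangle\le 0$ from the constraint defining $\mathcal U$, and finally AM--GM. I would also remark that $H$ here is timelike (or zero) pointing to the past and $y$ lies in the past-pointing normal cone by the constraint $-1\le\langle y,\|H\|^{-1}H\rangle\le 0$, which is what secures the sign; in the minimal case $H=0$ the term simply drops out.
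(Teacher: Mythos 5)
Your proof is correct and follows the same route as the paper: positive semidefiniteness of the matrix from the definition of $\mathcal B_r$ plus AM--GM, the sign $\la y,H\ra\le 0$ from the constraint in $\mathcal U$, and the bound $\Delta u\le nf^{\frac{1}{n-1}}$ obtained by expanding $\operatorname{div}(f\nabla u)$ and using $|\nabla u|<1$ to absorb $\la\nabla f,\nabla u\ra$ into $-|\nabla f|$. The only cosmetic issue is the meandering sign bookkeeping in the middle; the final paragraph settles it exactly as in the paper.
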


\begin{proof}
By definition, $\nabla^2u+\la y,h \ra+g/r\geq0$ for any $(x,y)\in\mathcal B_{r}$. Therefore
\[
\det\left(\nabla_i\D^ju+\la y,h_i^j \ra+\delta_i^j/r\right) \leq \left(\frac{\Delta u+\la y,H \ra}{n}+\frac{1}{r}\right)^n \leq \left(\frac{\Delta u}{n}+\frac{1}{r}\right)^n. 
\]
On the other hand, from \cref{eq:32} and the fact that $|\nabla u|<1$, we have
\[
\Delta u = nf^{\frac{1}{n-1}}-f^{-1}\left( |\nabla f|+\la \nabla f,\nabla u \ra \right) \leq nf^{\frac{1}{n-1}}, 
\]
completing the proof. 
\end{proof}

\begin{proof}[Proof of \cref{thm:11}]
By area/coarea formula and previous lemmas, we have
\[
\frac{|\mathcal A_{r}|}{r^{n+1}} \leq \int_{\mathcal B_{r}} \left(f^{\frac{1}{n-1}}+1/r\right)^n dyd\mu_g \leq \int_{\Sigma}\left(f^{\frac{1}{n-1}}+1/r\right)^n d\mu_g. 
\]
Sending $r\to\infty$, we derive
\beq
\tilde C_{n,\tau} \leq \int_{\Sigma}f^{\frac{n}{n-1}} d\mu_g. 
\eeq
Combining this and \cref{eq:31}, we conclude that
\[
\int_{\p\Sigma} f + \int_\Sigma|\nabla f|= n\left(\int_\Sigma f^{\frac{n}{n-1}}\right)^{\frac{1}{n}}\left(\int_\Sigma f^{\frac{n}{n-1}}\right)^{\frac{n-1}{n}} \geq n\tilde C_{n,\tau}^{\frac{1}{n}}\left(\int_\Sigma f^{\frac{n}{n-1}}\right)^{\frac{n-1}{n}}. 
\]
We finally write
$
C_{n,\tau} = n\tilde C_{n,\tau}^{\frac{1}{n}} = n\omega_n^{\frac{1}{n}}(n+1)^{-\frac{1}{n}}\tau^{-\frac{1}{n}}(\tau+\sqrt{\tau^2-1})^{-1}. 
$
\end{proof}

\section{Proof of \cref{thm:1.2new}}

Without the `mean convexity' assumption, the union $\mathcal A_r=\cap_{x\in\Sigma}\Lambda_r(x)$, with $\Lambda_r(x)$ defined by \cref{eq:3.3new}, might as well be empty. We therefore construct $u$ by
\beq\label{eq:4.1new}
\begin{cases}
\operatorname{div}(f\D u) = nf^\frac{n}{n-1} - \sqrt{|\D f|^2+f^2\|H\|^2}, & \text{ in } \Sigma, \\
\la \D u,\eta \ra_{\bar g} = 1, & \text{ along } \p\Sigma. 
\end{cases}
\eeq
Since \cref{eq:1.2} is homogeneous in $f$, we may assume 
\beq\label{eq:4.2new}
\int_{\p\Sigma} f = \int_{\Sigma}nf^\frac{n}{n-1} - \int_\Sigma\sqrt{|\D f|^2+f^2\|H\|^2}, 
\eeq
so that $\cref{eq:4.1new}$ admits a solution. We then modify \cref{eq:3.3new} by
\[
\Lambda_{r}(x)=\left\{ p\in\R^{n,1}: |(p-X(x))^\top|^2<\frac{r^2}{4}, -\frac{r}{2}\leq\la p-X(x),\|H\|^{-1}H(x) \ra\leq\frac{r}{2} \right\}
\]
and $\mathcal A_r=\cap_{x\in\Sigma}\Lambda_r(x)$. Accordingly, 
\begin{align*}
&\mathcal U=\left\{ (x,y): x\in\Sigma\setminus\p\Sigma, y\in T^{\bot}_x\Sigma, |\nabla u(x)|<\frac{1}{2}, -\frac{1}{2}\leq\la y,\|H\|^{-1}H(x) \ra\leq\frac{1}{2} \right\}, \\
&\mathcal B_{r} = \left\{ (x,y)\in\mathcal U: r\nabla^2u(x)+r\la h(x),y \ra+g(x)\geq0 \right\}, 
\end{align*}
and $\Phi_r(x,y)=X(x)+r(\D u(x)+y)$. Note that when $\|H\|=0$, the notation $\|H\|^{-1}H$ simply means the normal vector $\nu$. Similar as in \cref{sec:3}, we still have
\begin{itemize}
\item The inclusion $\Phi_r(\mathcal B_r)\supseteq\mathcal A_r$; and
\item The Jacobian $J(\Phi_r)=r^{n+1}\det\left(\nabla_i\D^ju(x)+\la y,h_i^j \ra+\delta_i^j/r\right)$. 
\end{itemize}
The proofs are almost identical. 

\begin{lem}\label{lem:4.1}
We have asymptotic behavior
\beq
\liminf_{r\to\infty}r^{-n-1}{|\mathcal A_r|} \geq \tilde C_{n,\tau}, 
\eeq
where $\tilde C_{n,\tau} = 2^{-n}(n+1)^{-1}\omega_n\tau^{-1}(\tau+\sqrt{\tau^2-1})^{-n}$. 
\end{lem}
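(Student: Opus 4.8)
The plan is to rerun the blow-down of \cref{lem:31} with the enlarged slab $\Lambda_r(x)$, and then, rather than redoing the elementary Euclidean volume computation, to reduce the resulting estimate to the one already established in \cref{lem:31} by a symmetrisation argument. First I would blow $\mathcal A_r$ down by the factor $r$. Since $\Sigma$ is compact it collapses to the origin, and, as in \cref{lem:31}, the rescaled domains $\tfrac1r\mathcal A_r=\bigcap_{x\in\Sigma}\big(\tfrac1rX(x)+\tilde\Lambda(x)\big)$ converge, as the translates $\tfrac1rX(x)$ tend to $0$ uniformly, to $\tilde{\mathcal A}:=\bigcap_{x\in\Sigma}\tilde\Lambda(x)$, where
\[
\tilde\Lambda(x)=\Big\{\tilde p\in\R^{n,1}:\ |\tilde p^\top|^2<\tfrac14,\ -\tfrac12\le\la\tilde p,\nu(x)\ra\le\tfrac12\Big\}
\]
and $\nu(x)$ is a unit timelike normal to $\Sigma$ at $x$ (its sign is immaterial here because the new slab is symmetric); this yields $\liminf_{r\to\infty}r^{-n-1}|\mathcal A_r|\ge|\tilde{\mathcal A}|$. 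Next, exactly as in \cref{lem:31}, the identity $|\tilde p^\top|^2=|\tilde p|^2+\la\tilde p,\nu(x)\ra^2$ shows that the constraint $|\tilde p^\top|^2<\tfrac14$ holds automatically on $\{|\tilde p|^2<0\}\cap\{|\la\tilde p,\nu(x)\ra|\le\tfrac12\}$; hence $\tilde\Lambda(x)$ contains the double cone $\tilde{\mathcal C}(x):=\{\tilde p:|\tilde p|^2<0,\ |\la\tilde p,\nu(x)\ra|\le\tfrac12\}$, and so $\tilde{\mathcal A}\supseteq\tilde{\mathcal C}:=\bigcap_{x\in\Sigma}\tilde{\mathcal C}(x)$.

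The reduction step is as follows. Since $\tilde{\mathcal C}(x)$ depends only on the line spanned by $\nu(x)$, and $|\nu_0(x)|\le\tau$ for every $x$ by the definition of $\tau$, one may enlarge the intersection to the whole $O(n)$-symmetric family of unit timelike directions of height at most $\tau$:
\[
\tilde{\mathcal C}\ \supseteq\ \tilde{\mathcal D}\ :=\ \bigcap_{|\nu_0|\le\tau}\big\{\tilde p\in\R^{n,1}:\ |\tilde p|^2<0,\ |\la\tilde p,\nu\ra|\le\tfrac12\big\}.
\]
This set is invariant under $\tilde p\mapsto-\tilde p$, and since every point of $\{|\tilde p|^2<0\}$ is timelike, the hyperplane $\{\tilde p_0=0\}$ cuts $\tilde{\mathcal D}$ into two halves of equal Lebesgue measure exchanged by this reflection. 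On the past half $\{|\tilde p|^2<0,\ \tilde p_0\le0\}$ each past-directed $\nu$ satisfies $\la\tilde p,\nu\ra<0$, while a future-directed $\nu$ imposes exactly the same constraint as $-\nu$; hence $\tilde{\mathcal D}\cap\{\tilde p_0\le0\}$ equals $\tfrac12$ times the cone $\bigcap_{|\nu_0|\le\tau,\ \nu_0<0}\{\tilde p:|\tilde p|^2<0,\ -1\le\la\tilde p,\nu\ra\le0\}$, which is precisely the set whose volume is evaluated to $\omega_n(n+1)^{-1}\tau^{-1}(\tau+\sqrt{\tau^2-1})^{-n}$ in the proof of \cref{lem:31}. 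Therefore
\[
|\tilde{\mathcal A}|\ \ge\ |\tilde{\mathcal D}|\ =\ 2\cdot 2^{-(n+1)}\cdot\frac{\omega_n}{(n+1)\tau(\tau+\sqrt{\tau^2-1})^n}\ =\ \tilde C_{n,\tau},
\]
which is the assertion.

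The step I expect to be the main obstacle is making the blow-down limit rigorous — verifying that the compactness of $\Sigma$ really forces $\tfrac1r\mathcal A_r$ to converge to $\tilde{\mathcal A}$ with no loss of measure, uniformly in $x$. This is exactly the point treated (somewhat informally) in \cref{lem:31}, and it transfers verbatim; if anything the symmetry of the new slab simplifies the bookkeeping, since one no longer needs to track the orientation of $\|H\|^{-1}H$ used in \cref{sec:3}. As an alternative to the symmetrisation above, one could instead recompute $|\tilde{\mathcal C}|$ directly by locating the extreme points of the (now doubled) cone in the $Ox_0x_1$-plane exactly as in \cref{lem:31}: halving both the tangential radius, $r\mapsto r/2$, and the half-height of the slab is what multiplies the constant of \cref{lem:31} by $2^{-n}$, the two lobes of the bicone contributing equally by reflection.
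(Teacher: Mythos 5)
Your proposal is correct and follows essentially the same route as the paper: blow down by $r$, replace each $\tilde\Lambda(x)$ by the inscribed double cone via $|\tilde p^\top|^2=|\tilde p|^2+\la\tilde p,\nu\ra^2$, and compute the volume of the worst-case region, which is the Lemma 3.1 bicone shrunk by $1/2$ in every direction and doubled by the past/future reflection, giving $2\cdot2^{-(n+1)}\tilde C'=\tilde C_{n,\tau}$. The paper simply recomputes the vertices $a,b,c$ at half scale rather than invoking the scaling/reflection symmetry, which is exactly the "alternative" you mention at the end.
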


\begin{proof}
\begin{figure}[t]
\centering
\includegraphics[scale=0.6]{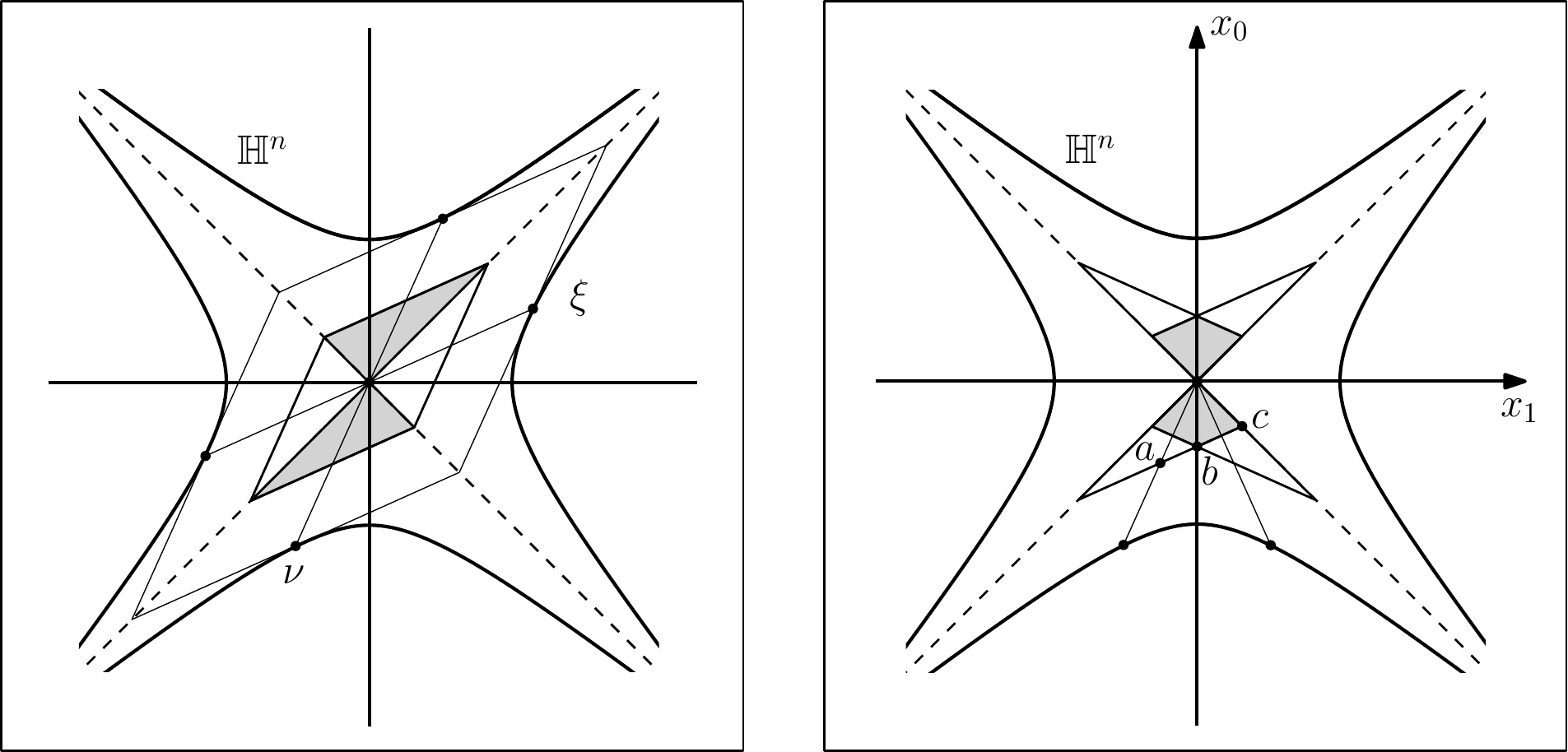}
\caption{Left: each $\tilde\Lambda(x)$ contains two cones, the shaded area; Right: $\tilde\Lambda$ contains at least $\tilde {\mathcal C}$, the shaded area.}
\label{fig:3new}
\end{figure}
The proof is similar to that of \cref{lem:31}, with minor differences. Since the minimal height of unit normals is $-\tau$, in \cref{fig:3new}, $a=(-\frac{\tau}{2}, -\frac{\sqrt{\tau^2-1}}{2},0,\cdots,0)$. The vector $\vec{ac}$ is parallel to $\xi=(\sqrt{\tau^2-1},\tau,0,\cdots,0)$. From this we derive $b=(-\frac{1}{2\tau}, 0\cdots,0)$ and $c=(\frac{-\tau+\sqrt{\tau^2-1}}{2},\frac{\tau-\sqrt{\tau^2-1}}{2},0\cdots,0)$. Thus $\tilde{\mathcal C}$ has volume
\[
|\tilde{\mathcal C}| = \omega_n\left( \frac{\tau-\sqrt{\tau^2-1}}{2} \right)^n\cdot\frac{1}{2\tau}\cdot\frac{1}{n+1}\cdot2 = 2^{-n}(n+1)^{-1}\omega_n\tau^{-1}(\tau+\sqrt{\tau^2-1})^{-n}, 
\]
from which it follows that
\[
\liminf_{r\to\infty}r^{-n-1}{|\mathcal A_r|} \geq |\tilde{\mathcal C}| \geq \tilde C_{n,\tau}, 
\]
completing the proof. 
\end{proof}

\begin{lem}
We have for any $(x,y)\in\mathcal B_{r}$
\[
\det\left(\nabla_i\D^ju(x)+\la y,h_i^j \ra+\delta_i^j/r\right) \leq \left(f^{\frac{1}{n-1}}+1/r\right)^n. 
\]
\end{lem}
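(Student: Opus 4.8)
The plan is to mirror the proof of \cref{lem:34} almost verbatim, changing only the two points where the set $\mathcal B_r$ and the PDE \cref{eq:4.1new} differ from their counterparts in \cref{sec:3}. First I would note that, by definition of $\mathcal B_r$, on this set we have $r\nabla^2 u(x)+r\la h(x),y \ra + g(x)\geq 0$, so the matrix $\nabla_i\D^j u(x)+\la y,h_i^j \ra + \delta_i^j/r$ is positive semidefinite, and by the arithmetic--geometric mean inequality for eigenvalues its determinant is bounded by $\left(\tfrac{1}{n}(\Delta u + \la y,H \ra) + \tfrac{1}{r}\right)^n$. Here $\la y,H \ra$ has a sign issue rather than being automatically $\leq 0$: since $(x,y)\in\mathcal U$ we only know $-\tfrac12\leq\la y,\|H\|^{-1}H \ra\leq\tfrac12$, so $\la y,H \ra\leq \tfrac12\|H\|$ (when $H$ is timelike, $\|H\|^{-1}H$ is the future-pointing unit vector and $\la y, \cdot\ra$ can be positive). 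Thus I would instead keep the term and write $\det(\cdots)\leq\left(\tfrac{1}{n}(\Delta u + \|H\|) \cdot (\text{something})+\tfrac1r\right)^n$ — more carefully, bound $\la y,H\ra \le \tfrac12\|H\|$ and carry this forward.

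Next I would estimate $\Delta u$ from the PDE \cref{eq:4.1new}. Expanding $\operatorname{div}(f\D u) = f\Delta u + \la \D f,\D u \ra$, we get
\[
\Delta u = nf^{\frac{1}{n-1}} - f^{-1}\sqrt{|\D f|^2 + f^2\|H\|^2} - f^{-1}\la \D f,\D u \ra.
\]
Since $|\D u|<\tfrac12$ on $\mathcal U$, Cauchy--Schwarz gives $-f^{-1}\la\D f,\D u\ra \leq \tfrac12 f^{-1}|\D f|$. Combining, $\Delta u \leq nf^{\frac{1}{n-1}} - f^{-1}\sqrt{|\D f|^2+f^2\|H\|^2} + \tfrac12 f^{-1}|\D f|$. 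Then $\Delta u + \la y,H\ra \leq nf^{\frac{1}{n-1}} - f^{-1}\sqrt{|\D f|^2+f^2\|H\|^2} + \tfrac12 f^{-1}|\D f| + \tfrac12\|H\|$. The key inequality to check is that $-\sqrt{|\D f|^2+f^2\|H\|^2} + \tfrac12|\D f| + \tfrac12 f\|H\| \leq 0$, equivalently $|\D f| + f\|H\| \leq 2\sqrt{|\D f|^2+f^2\|H\|^2}$, which is just $(a+b)^2\leq 2(a^2+b^2)$ with $a=|\D f|$, $b=f\|H\|$. Hence $\Delta u + \la y,H\ra \leq nf^{\frac{1}{n-1}}$, and so $\det(\cdots)\leq\left(f^{\frac{1}{n-1}}+\tfrac1r\right)^n$ as claimed.

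The main obstacle — really the only nontrivial point — is the bookkeeping with the factor $\tfrac12$: one must track that the bound $|\D u|<\tfrac12$ (not $|\D u|<1$) and the bound $\la y,\|H\|^{-1}H\ra\leq\tfrac12$ (not $\leq 0$) from the modified definitions of $\mathcal U$ and $\mathcal B_r$ conspire precisely so that the excess term $\tfrac12|\D f| + \tfrac12 f\|H\|$ is absorbed by the single square-root term $\sqrt{|\D f|^2+f^2\|H\|^2}$ via $(a+b)^2\le 2(a^2+b^2)$. This is exactly the reason the construction in \cref{sec:4} halves all the radii and why the constant $C_{n,\tau}$ in \cref{thm:1.2new} picks up the factor $2^{-1}$ relative to \cref{thm:11}. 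Everything else is identical to the proof of \cref{lem:34}, and by tensoriality the determinant identity holds in any coordinate system.

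\begin{proof}
By the definition of $\mathcal B_r$, the symmetric matrix $\nabla_i\D^j u(x) + \la y, h_i^j \ra + \delta_i^j / r$ is positive semidefinite. Hence, by the arithmetic--geometric mean inequality applied to its eigenvalues,
\[
\det\left(\nabla_i\D^j u(x) + \la y, h_i^j \ra + \frac{\delta_i^j}{r}\right) \leq \left(\frac{\Delta u + \la y, H \ra}{n} + \frac{1}{r}\right)^n.
\]
Since $(x,y)\in\mathcal U$, we have $\la y, \|H\|^{-1}H(x)\ra \leq \tfrac12$, and therefore $\la y, H(x)\ra \leq \tfrac12\|H\|$. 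On the other hand, expanding $\operatorname{div}(f\D u) = f\Delta u + \la \D f, \D u \ra$ in \cref{eq:4.1new} and using $|\D u| < \tfrac12$ together with the Cauchy--Schwarz inequality $-\la \D f, \D u \ra \leq \tfrac12 |\D f|$, we obtain
\[
\Delta u = n f^{\frac{1}{n-1}} - f^{-1}\sqrt{|\D f|^2 + f^2\|H\|^2} - f^{-1}\la \D f, \D u \ra \leq n f^{\frac{1}{n-1}} - f^{-1}\sqrt{|\D f|^2 + f^2\|H\|^2} + \frac{|\D f|}{2f}.
\]
Combining the two estimates,
\[
\Delta u + \la y, H \ra \leq n f^{\frac{1}{n-1}} - f^{-1}\sqrt{|\D f|^2 + f^2\|H\|^2} + \frac{|\D f|}{2f} + \frac{\|H\|}{2}.
\]
Finally, the elementary inequality $(a+b)^2 \leq 2(a^2+b^2)$ with $a = |\D f|$ and $b = f\|H\|$ gives $|\D f| + f\|H\| \leq 2\sqrt{|\D f|^2 + f^2\|H\|^2}$, so that $\tfrac{|\D f|}{2f} + \tfrac{\|H\|}{2} \leq f^{-1}\sqrt{|\D f|^2 + f^2\|H\|^2}$. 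Therefore $\Delta u + \la y, H \ra \leq n f^{\frac{1}{n-1}}$, and hence
\[
\det\left(\nabla_i\D^j u(x) + \la y, h_i^j \ra + \frac{\delta_i^j}{r}\right) \leq \left(f^{\frac{1}{n-1}} + \frac{1}{r}\right)^n,
\]
as claimed.
\end{proof}
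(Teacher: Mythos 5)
Your proof is correct and follows essentially the same route as the paper's: the arithmetic--geometric mean bound on the determinant, the expansion of $\operatorname{div}(f\D u)$ from \cref{eq:4.1new}, and a Cauchy--Schwarz estimate exploiting the halved bounds $|\D u|<\tfrac12$ and $\la y,\|H\|^{-1}H\ra\le\tfrac12$. The only (immaterial) difference is that you replace $|\D u|$ by $\tfrac12$ before applying Cauchy--Schwarz, whereas the paper bounds $\tfrac12 f\|H\|+|\D f||\D u|\le\sqrt{|\D f|^2+f^2\|H\|^2}\,\sqrt{|\D u|^2+\tfrac14}$ first and then uses $|\D u|<\tfrac12$.
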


\begin{proof}
By definition, $\nabla^2u+\la y,h \ra+g/r\geq0$ for any $(x,y)\in\mathcal B_{r}$. Therefore
\[
\det\left(\nabla_i\D^ju+\la y,h_i^j \ra+\delta_i^j/r\right) \leq \left(\frac{\Delta u+\la y,H \ra}{n}+\frac{1}{r}\right)^n. 
\]
Moreover, we have
\begin{align*}
\la y,fH \ra - \la \nabla f,\nabla u \ra &\leq \frac{1}{2}f\|H\| + |\D f||\D u| \leq \sqrt{|\D f|^2+f^2\|H\|^2}\sqrt{|\D u|^2+1/4} \\
&\leq \sqrt{|\D f|^2+f^2\|H\|^2}\sqrt{1/4+1/4} \leq \sqrt{|\D f|^2+f^2\|H\|^2}. 
\end{align*}
Combined with \cref{eq:4.1new}, it follows that
\[
\Delta u + \la y,H \ra = nf^{\frac{1}{n-1}}-f^{-1}\left( \sqrt{|\nabla f|^2+f^2\|H\|^2}+\la \nabla f,\nabla u \ra - \la y,fH \ra \right) \leq nf^{\frac{1}{n-1}}, 
\]
giving the assertion. 
\end{proof}

\begin{proof}[Proof of \cref{thm:1.2new}]
By area/coarea formula, we have
\[
\frac{|\mathcal A_r|}{r^{n+1}} \leq \int_{\mathcal B_r}\frac{J(\Phi_r)}{r^{n+1}}dyd\mu_g \leq \int_\Sigma\left( f^\frac{1}{n-1}+1/r \right)^nd\mu_g. 
\]
Sending $r\to\infty$, we derive $\int_\Sigma f^\frac{n}{n-1}\geq\tilde C_{n,\tau}$. In the view of \cref{eq:4.2new}, we compute
\begin{align*}
&\int_{\p\Sigma}f + \int_\Sigma\sqrt{|\D f|^2+f^2\|H\|^2} = n\int_\Sigma f^\frac{n}{n-1} \\
&= n\left(\int_\Sigma f^\frac{n}{n-1}\right)^\frac{n-1}{n}\left(\int_\Sigma f^\frac{n}{n-1}\right)^\frac{1}{n} \geq n\tilde C_{n,\tau}^{\frac{1}{n}}\left(\int_\Sigma f^\frac{n}{n-1}\right)^\frac{n-1}{n}. 
\end{align*}
Finally we write 
$
C_{n,\tau} = n\tilde C_{n,\tau}^{\frac{1}{n}} = n2^{-1}(n+1)^{-\frac{1}{n}}\omega_n^{\frac{1}{n}}\tau^{-\frac{1}{n}}(\tau+\sqrt{\tau^2-1})^{-1}. 
$
\end{proof}

\section{Proof of \cref{thm:13}}

We fix such a section $\nu\in\Gamma(T^\bot\Sigma)$, with $|\nu|^2=-1$; that is, $\nu$ is a unit timelike normal vector field along $\Sigma$. At each point $x\in\Sigma$, we have decompostion
\[
T_x^{\bot}\Sigma = T_x^{\bot,1}\Sigma \oplus T_x^{\bot,2}\Sigma, 
\]
where $T_x^{\bot,2}\Sigma = \Span\{\nu(x)\}$. Note that $T_x^{\bot,1}\Sigma$ is a spacelike subspace. Accordingly, we write a normal vector as $y=y^{\bot,1}+y^{\bot,2}$. The equation we consider now is 
\beq\label{eq:5.1}
\begin{cases}
\operatorname{div}(f\D u) = mf^\frac{m}{m-1} - \sqrt{|\D f|^2+f^2(|H^{\bot,1}|^2+\|H^{\bot,2}\|^2)}, & \Sigma\setminus\p\Sigma, \\
\la \D u,\eta \ra = 1, & \p\Sigma. 
\end{cases}
\eeq
We normalize by
\beq\label{eq:5.2}
\int_{\p\Sigma} f = \int_{\Sigma}mf^\frac{m}{m-1} - \int_\Sigma\sqrt{|\D f|^2+f^2(|H^{\bot,1}|^2+\|H^{\bot,2}\|^2)}
\eeq
so that the PDE has a solution. The domains and codomains are
\begin{align*}
&\Lambda_r(x) = \left\{p\in\R^{n,1}: |(p-X(x))^\top|^2+|(p-X(x))^{\bot,1}|^2<\frac{r^2}{4}, -\frac{r}{2}\leq\la p-X(x),\nu(x) \ra\leq \frac{r}{2}\right\}, \\
&\mathcal A_r = \cap_{x\in\Sigma}\Lambda_r(x), \\
&\mathcal U = \left\{ (x,y): x\in\Sigma\setminus\p\Sigma, y\in T_x^\bot\Sigma, |\D u(x)|^2+|y^{\bot,1}|^2<\frac{1}{4}, -\frac{1}{2}\leq\la y^{\bot,2},\nu \ra\leq\frac{1}{2} \right\}, \\
&\mathcal B_r = \{ (x,y)\in\mathcal U: r\D^2u(x)+r\la y,h(x)+g(x)\geq0 \ra \}. 
\end{align*}
Finally we take $\Phi_r: T^\bot\Sigma\to\R^{n,1}: (x,y)\mapsto X(x)+r(\D u(x)+y)$. As in \cref{sec:3}, 
\begin{itemize}
\item The inclusion $\Phi_r(\mathcal B_r)\supseteq\mathcal A_r$; and
\item The Jacobian $J(\Phi_r)=r^{n+1}\det\left(\nabla_i\D^ju(x)+\la y,h_i^j \ra+\delta_i^j/r\right)$. 
\end{itemize}
The proofs are almost identical. 

\begin{lem}
We have asymptotic behavior
\beq
\liminf_{r\to\infty}r^{-n-1}{|\mathcal A_r|} \geq \tilde C_{n,\tau}, 
\eeq
where $\tilde C_{n,\tau} = 2^{-n}(n+1)^{-1}\omega_n\tau^{-1}(\tau+\sqrt{\tau^2-1})^{-n}$. 
\end{lem}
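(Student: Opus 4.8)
The plan is to mimic the blow-down argument of \cref{lem:31} and \cref{lem:4.1}, now in the higher-codimension setting. First I would blow down $\mathcal A_r$ by the factor $r$; since $\Sigma$ is compact it collapses to the origin, and each $\Lambda_r(x)$ converges to the limiting region
\[
\tilde\Lambda(x) = \left\{ \tilde p\in\R^{n,1}: |\tilde p^\top|^2 + |\tilde p^{\bot,1}|^2 < \tfrac14,\ -\tfrac12 \leq \la \tilde p,\nu(x) \ra \leq \tfrac12 \right\}.
\]
The key observation is that the constraint $|\tilde p^\top|^2+|\tilde p^{\bot,1}|^2<\tfrac14$ is exactly the condition $|\tilde p - \la \tilde p,\nu(x)\ra\nu(x) + \text{(correction)}|^2 < \tfrac14$ along the full spacelike complement of $\nu(x)$; in other words, writing $\tilde p = \tilde p_{\mathrm{sp}} + t\,\nu(x)$ with $\tilde p_{\mathrm{sp}}$ spacelike and orthogonal to $\nu(x)$, the region is $\{|\tilde p_{\mathrm{sp}}|^2 < \tfrac14,\ -\tfrac12 \le t \le \tfrac12\}$. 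Hence $\tilde\Lambda(x)$ contains the double light-cone $\tilde{\mathcal C}(x) = \{\tilde p \in \R^{n,1} : |\tilde p|^2 < 0,\ -\tfrac12 \le \la \tilde p,\nu(x)\ra \le \tfrac12\}$, since for lightlike or timelike $\tilde p$ one has $|\tilde p_{\mathrm{sp}}|^2 = t^2|\nu(x)|^2\cdot(\text{stuff})$... more precisely $|\tilde p|^2 = |\tilde p_{\mathrm{sp}}|^2 - t^2 < 0$ forces $|\tilde p_{\mathrm{sp}}|^2 < t^2 \le \tfrac14$.

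Next I would intersect over $x\in\Sigma$. Since every admissible unit normal has $|\nu_0(x)| \le \tau$, the worst case for the cone opening is $\nu$ with zeroth coordinate $-\tau$, and $\tilde{\mathcal C} = \cap_{x\in\Sigma}\tilde{\mathcal C}(x)$ contains the double cone built from the extreme slope, exactly as in \cref{fig:3new}. The vertices are computed as in \cref{lem:4.1}: placing $a,b,c$ in the $Ox_0x_1$-plane with $a = (-\tfrac{\tau}{2}, -\tfrac{\sqrt{\tau^2-1}}{2}, 0,\dots,0)$ and $\vec{ac} \parallel (\sqrt{\tau^2-1},\tau,0,\dots,0)$ yields $b = (-\tfrac{1}{2\tau},0,\dots,0)$ and $c = (\tfrac{-\tau+\sqrt{\tau^2-1}}{2}, \tfrac{\tau-\sqrt{\tau^2-1}}{2},0,\dots,0)$, where now the cross-sectional ball is $n$-dimensional (codimension of $\Sigma$ plus one spatial direction fills up the ambient $\R^{n+1}$). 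This gives
\[
|\tilde{\mathcal C}| = \omega_n\left(\frac{\tau-\sqrt{\tau^2-1}}{2}\right)^n\cdot\frac{1}{2\tau}\cdot\frac{1}{n+1}\cdot 2 = 2^{-n}(n+1)^{-1}\omega_n\tau^{-1}(\tau+\sqrt{\tau^2-1})^{-n},
\]
using $(\tau-\sqrt{\tau^2-1})^n = (\tau+\sqrt{\tau^2-1})^{-n}$. Therefore $\liminf_{r\to\infty} r^{-n-1}|\mathcal A_r| \ge |\tilde{\mathcal C}| = \tilde C_{n,\tau}$, which is the claim.

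The main obstacle — and the point deserving the most care — is verifying that the blow-down limit genuinely contains the double cone $\tilde{\mathcal C}$ uniformly over $x\in\Sigma$, i.e.\ that nothing is lost in the higher-codimension bookkeeping of the splitting $T^\bot\Sigma = T^{\bot,1}\Sigma \oplus T^{\bot,2}\Sigma$. Concretely one must check that the $(n-m)$-dimensional directions in $T^{\bot,1}\Sigma$ together with the $m$ tangential directions of $\Sigma$ and the extra direction transverse to the light cone combine to fill an honest $n$-dimensional round ball in the limit, so that the volume integrand is $\omega_n(\cdots)^n$ rather than a lower-dimensional quantity. Once the geometry of $\tilde{\mathcal C}(x)$ is pinned down, the intersection over the compact $\Sigma$ and the extremal-slope computation are routine, following \cref{lem:31} and \cref{lem:4.1} verbatim. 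The only other subtlety is that, unlike in \cref{sec:3}, there is no mean-convexity hypothesis, so one cannot assume $H$ points a fixed timelike direction; but this plays no role in the volume estimate, which only uses the slope bound $\tau$.
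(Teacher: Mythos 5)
Your proposal is correct and follows essentially the same route as the paper: decompose $\R^{n,1}$ as the spacelike complement of $\nu(x)$ plus $\Span\{\nu(x)\}$, observe that the defining inequalities of $\Lambda_r(x)$ then say exactly that the spacelike part lies in an $n$-dimensional ball of radius $r/2$ and the $\nu$-component in $[-r/2,r/2]$, so the region has the same shape as in \cref{fig:3new}, and then the blow-down, double-cone containment, and extremal-slope volume computation of \cref{lem:4.1} carry over verbatim. The point you flag as the main obstacle — that the $m$ tangential plus $(n-m)$ spacelike normal directions fill an honest $n$-ball — is precisely the content of the paper's one-line reduction, and your verification of it is the right one.
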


\begin{proof}
We write $\R^{n,1}=T_x\Sigma\oplus T_x^{\bot}\Sigma=(T_x\Sigma\oplus T_x^{\bot,1}\Sigma)\oplus T_x^{\bot,2}\Sigma$. If $p\in\Lambda_r(x)$, then
\[
|(p-X(x))^\top|^2+|(p-X(x))^{\bot,1}|^2<\frac{r^2}{4} \quad \text{ and } \quad |(p-X(x))^{\bot,2}|^2 \leq \frac{r^2}{4}. 
\]
If $\xi\in T_x\Sigma\oplus T_x^{\bot,1}\Sigma$ is a unit vector and $p-X=s\xi+t\nu$, then $|s|,|t|\leq\frac{r}{2}$, the shape of $\Lambda_r(x)$ and $\mathcal A_r$ would be exactly like those in \cref{fig:3new}. Thus the same conclusion holds as in \cref{lem:4.1}. 
\end{proof}

\begin{lem}
We have for any $(x,y)\in\mathcal B_{r}$
\[
\det\left(\nabla_i\D^ju(x)+\la y,h_i^j \ra+\delta_i^j/r\right) \leq \left(f^{\frac{1}{m-1}}+1/r\right)^m. 
\]
\end{lem}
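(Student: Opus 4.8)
The plan is to follow the proof of \cref{lem:34}, and the analogous lemma in the proof of \cref{thm:1.2new}, almost verbatim, replacing each estimate by its codimension-$(n-m+1)$ counterpart. Fix $(x,y)\in\mathcal B_r$. Then the symmetric bilinear form $\D^2 u + \la y, h\ra + g/r$ is positive semidefinite, so applying the arithmetic--geometric mean inequality to the eigenvalues of the $m\times m$ matrix $\nabla_i\D^j u + \la y, h_i^j\ra + \delta_i^j/r$ and using $\tr(\nabla_i\D^j u) = \Delta u$, $\tr(\la y, h_i^j\ra) = \la y, H\ra$ and $\tr(\delta_i^j) = m$, we obtain
\[
\det\left(\nabla_i\D^j u(x) + \la y, h_i^j\ra + \delta_i^j/r\right) \leq \left(\frac{\Delta u(x) + \la y, H(x)\ra}{m} + \frac{1}{r}\right)^m.
\]
It therefore suffices to prove the pointwise bound $\Delta u + \la y, H\ra \leq m f^{1/(m-1)}$.

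To this end I would substitute the PDE \cref{eq:5.1}. Since $\divergence(f\D u) = f\Delta u + \la \D f, \D u\ra$, the first line of \cref{eq:5.1} rearranges to
\[
\Delta u + \la y, H\ra = m f^{\frac{1}{m-1}} - f^{-1}\left( \sqrt{|\D f|^2 + f^2(|H^{\bot,1}|^2 + \|H^{\bot,2}\|^2)} + \la \D f, \D u\ra - \la y, fH\ra \right),
\]
so the claim reduces to showing that the parenthesized quantity is nonnegative, i.e.\ that $\la y, fH\ra - \la \D f, \D u\ra \leq \sqrt{|\D f|^2 + f^2(|H^{\bot,1}|^2 + \|H^{\bot,2}\|^2)}$.

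To bound the left-hand side I would decompose $y = y^{\bot,1} + y^{\bot,2}$ and $H = H^{\bot,1} + H^{\bot,2}$ along $T_x^\bot\Sigma = T_x^{\bot,1}\Sigma \oplus T_x^{\bot,2}\Sigma$. On the spacelike factor $T_x^{\bot,1}\Sigma$, ordinary Cauchy--Schwarz gives $\la y^{\bot,1}, fH^{\bot,1}\ra \leq f|H^{\bot,1}|\,|y^{\bot,1}|$; on the timelike line $T_x^{\bot,2}\Sigma = \Span\{\nu\}$, writing $y^{\bot,2} = a\nu$, $H^{\bot,2} = b\nu$ and using $|\nu|^2 = -1$, we get $\la y^{\bot,2}, fH^{\bot,2}\ra = -abf \leq |a|\,|b|\,f \leq \frac{1}{2} f\|H^{\bot,2}\|$, since $|a| = |\la y^{\bot,2}, \nu\ra| \leq \frac12$ by the definition of $\mathcal U$ and $|b| = \|H^{\bot,2}\|$. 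Combining these with $-\la \D f, \D u\ra \leq |\D f|\,|\D u|$, then Cauchy--Schwarz for the pairing of $(|\D f|,\, f|H^{\bot,1}|,\, f\|H^{\bot,2}\|)$ with $(|\D u|,\, |y^{\bot,1}|,\, \frac12)$, and finally the constraint $|\D u(x)|^2 + |y^{\bot,1}|^2 < \frac14$ from the definition of $\mathcal U$ (so that $|\D u|^2 + |y^{\bot,1}|^2 + \frac14 < \frac12 < 1$), yields the required inequality and hence the lemma.

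I do not expect any genuine obstacle here; the one point that will need care is the Lorentzian bookkeeping --- tracking which normal directions are spacelike versus timelike, using $\|H^{\bot,2}\|^2 = -|H^{\bot,2}|^2$, and correctly handling the sign produced by $|\nu|^2 = -1$ when pairing $y^{\bot,2}$ with $H^{\bot,2}$ --- so that the Cauchy--Schwarz step lands precisely on the expression $\sqrt{|\D f|^2 + f^2(|H^{\bot,1}|^2 + \|H^{\bot,2}\|^2)}$ appearing in the normalization \cref{eq:5.2}.
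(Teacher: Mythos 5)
Your proposal is correct and follows essentially the same route as the paper's proof: arithmetic--geometric mean on the positive semidefinite matrix, substitution of the PDE \cref{eq:5.1}, the split of the normal pairing into the spacelike $\bot,1$ and timelike $\bot,2$ factors, and the final three-term Cauchy--Schwarz against $(|\D u|, |y^{\bot,1}|, \tfrac12)$ using the constraint $|\D u|^2+|y^{\bot,1}|^2<\tfrac14$. Your explicit Lorentzian bookkeeping on the $\Span\{\nu\}$ factor is a welcome clarification of a step the paper leaves implicit (and your sign conventions fix a small typo in the paper's displayed chain).
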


\begin{proof}
By geometric-arithmetic inequality, 
\[
\det\left(\nabla_i\D^ju(x)+\la y,h_i^j \ra+\delta_i^j/r\right) \leq \left( \frac{\Delta u+\la y,H \ra}{m} + \frac{1}{r} \right)^m. 
\]
Now by \cref{eq:5.1}, we have
\begin{align*}
\Delta u+\la y,H \ra &= mf^\frac{1}{m-1} - f^{-1}(\sqrt{|\D f|^2+f^2(|H^{\bot,1}|^2+\|H^{\bot,2}\|^2)}+\la \D f,\D u \ra - \la fH,y \ra). 
\end{align*}
On the other hand, by the definition of $\mathcal B_r$, 
\begin{align*}
-\la \D f,\D u \ra + \la fH,y \ra &= \la \D f,\D u \ra + \la fH^{\bot,1},y^{\bot,1} \ra + \la fH^{\bot,2},y^{\bot,2} \ra \\
&\leq |\D f||\D u| + |fH^{\bot,1}||y^{\bot,1}| + \frac{1}{2}f\|H^{\bot,2}\| \\
&\leq \sqrt{|\D f|^2+f^2|H^{\bot,1}|^2+f^2\|H^{\bot,2}\|^2}\sqrt{|\D u|^2+|y^{\bot,1}|^2+\frac{1}{4}} \\
&\leq \sqrt{|\D f|^2+f^2(|H^{\bot,1}|^2+\|H^{\bot,2}\|^2)}. 
\end{align*}
This implies that $\Delta u+\la y,H \ra\leq mf^\frac{1}{m-1}$, completing the proof. 
\end{proof}

\begin{proof}[Proof of \cref{thm:13}]
Using area/coarea formula, we have
\[
\frac{|\mathcal A_r|}{r^{n+1}} \leq \int_\Sigma\left( f^\frac{1}{m-1}+1/r \right)^m. 
\]
Taking $r\to\infty$, we obtain
\[
\tilde C_{n,\tau} \leq \int_\Sigma f^\frac{m}{m-1}. 
\]
Taking \cref{eq:5.2} into consideration, we derive
\begin{align*}
m\tilde C_{n,\tau}^{\frac{1}{m}} \left( \int_{\Sigma}f^\frac{m}{m-1} \right)^\frac{m-1}{m} &\leq m\left( \int_{\Sigma}f^\frac{m}{m-1} \right)^\frac{1}{m}\left( \int_{\Sigma}f^\frac{m}{m-1} \right)^\frac{m-1}{m} = m\int_{\Sigma}f^\frac{m}{m-1} \\
&= \int_{\p\Sigma} f + \int_\Sigma\sqrt{|\D f|^2+f^2(|H^{\bot,1}|^2+\|H^{\bot,2}\|^2)}. 
\end{align*}
Finally, we compute $C_{m,n,\tau} = m\tilde C_{n,\tau}^{\frac{1}{m}} = m2^{-\frac{n}{m}}(n+1)^{-\frac{1}{m}}\omega_n^\frac{1}{m}\tau^{-\frac{1}{m}}(\tau+\sqrt{\tau^2-1})^{-\frac{n}{m}}$. 
\end{proof}

\newpage

\printbibliography

\end{document}